\newcommand{\Rmnum}[1]{\expandafter\@slowromancap\romannumeral #1@}
\newtheorem{theorem}{Theorem}
\newtheorem{remark}{Remark}
\newtheorem{lemma}{Lemma}[section]
\newcommand{\EE}{\mathbb{E}}
\newcommand{\rd}{\mathrm{d}}
\newcommand{\setZ}{\mathbb{Z}}
\newcommand{\setD}{\mathcal{D}}
\newcommand{\setG}{\Gamma}
\newcommand{\vecx}{\boldsymbol{x}}
\newcommand{\veck}{\boldsymbol{k}}
\newcommand{\vecb}{\boldsymbol{b}}
\newcommand{\vecu}{\boldsymbol{u}}
\newcommand{\vecDelta}{\boldsymbol{\Delta}}
\newcommand{\vecdelta}{\boldsymbol{\delta}}
\newcommand{\indicator}[1]{1_{#1}}
{\bgroup
	\addtolength\abovedisplayshortskip{#1}
	\addtolength\abovedisplayskip{#1}
	\addtolength\belowdisplayshortskip{#1}
	\addtolength\belowdisplayskip{#1}}
{\egroup\ignorespacesafterend}
\title{The inverse of the star discrepancy of a union of randomly shifted Korobov rank-1 lattice point sets depends polynomially on the dimension}
\author[1]{Jiarui Du}
\author[2]{Josef Dick \thanks{Corresponding author: josef.dick@unsw.edu.au}}
\affil[1]{School of Mathematics, South China University of Technology, Guangzhou 510641, Guangdong, People’s Republic of China}
\affil[2]{School of Mathematics and Statistics, The University of New South Wales, Kensington, NSW 2052, Australia}
\begin{document}
	
	\maketitle
	
\begin{abstract}
The inverse of the star discrepancy, $N(\epsilon, s)$, defined as the minimum number of points required to achieve a star discrepancy of at most $\epsilon$ in dimension $s$, is known to depend linearly on $s$. However, explicit constructions achieving this optimal linear dependence remain elusive. Recently, Dick and Pillichshammer (2025) made significant progress by showing that a multiset union of randomly digitally shifted Korobov polynomial lattice point sets almost achieve the optimal dimension dependence with high probability. 
    
In this paper, we investigate the analog of this result in the setting of classical integer arithmetic using Fourier analysis. We analyze point sets constructed as multiset unions of Korobov rank-1 lattice point sets modulo a prime $N$. We provide a comprehensive analysis covering four distinct construction scenarios, combining either random or fixed integer generators with either continuous torus shifts or discrete grid shifts. We prove that in all four cases, the star discrepancy is bounded by a term of order $O(s \log(N_{tot}) / \sqrt{N_{tot}})$ with high probability, where $N_{tot}$ is the total number of points. This implies that the inverse of the star discrepancy for these structured sets depends quadratically on the dimension $s$. While the proofs are probabilistic, our results significantly reduce the search space for optimal point sets from a continuum to a finite set of candidates parameterized by integer generators and random shifts.
\end{abstract}
\noindent\textbf{Key words}:
Star discrepancy, Korobov rank-1 lattice rules, Fourier analysis, quasi-Monte Carlo


\section{Introduction}
    Let $s, N \in \mathbb{N}$. For a finite set $E = \{\boldsymbol{x}_0, \dots, \boldsymbol{x}_{N-1}\}$ in $[0, 1)^s$, the star discrepancy is defined as
\begin{equation} \label{eq:star_discrepancy}
    D_N^*(E) := \sup_{\boldsymbol{b} \in [0, 1]^s} \left| \frac{1}{N} \sum_{n=0}^{N-1} 1_{[\boldsymbol{0}, \boldsymbol{b})}(\boldsymbol{x}_n) - \lambda([\boldsymbol{0}, \boldsymbol{b})) \right|, \nonumber
\end{equation}
where $1_{[\boldsymbol{0}, \boldsymbol{b})}$ denotes the indicator function of the axis-parallel box $[\boldsymbol{0}, \boldsymbol{b})$ and $\lambda$ denotes the Lebesgue measure. This quantity is a central complexity parameter in quasi-Monte Carlo (QMC) integration. 
The classical Koksma-Hlawka (K-H) inequality provides an error bound on the integration error \cite{dick2010,owenqmc}: 
		\begin{align*}
				\left|\int_{(0,1)^d} f({\bm{u}}) \mathrm{d} {\bm{u}}-\frac{1}{N} \sum_{i=1}^N f(\bm{u}_{i})\right|\leq D_N^*\left(\bm{u}_{1}, \ldots, \bm{u}_{N}\right) V_{\mathrm{HK}}(f),
			\end{align*}
		where $V_{\mathrm{HK}}(f)$ denotes the variation of $f$ in the sense of Hardy and Krause. For its definition and properties, we refer to \cite{owen2005hk}.
By the Koksma-Hlawka inequality, a small star discrepancy implies a small worst-case integration error for functions of bounded variation.

A fundamental quantity of interest is the inverse of the star discrepancy,
\begin{equation}
    N(\epsilon, s) := \min \{ N \in \mathbb{N} : \exists E \subset [0, 1)^s, |E|=N \text{ such that } D_N^*(E) \le \epsilon \}, \nonumber
\end{equation}
for $\epsilon \in (0, 1]$ and $s \in \mathbb{N}$.
Theoretical investigations have confirmed that the inverse star discrepancy $N(\varepsilon, s)$ exhibits a linear dependence on the dimension $s$. As established by Heinrich et al.~\cite{heinrich2000}, there exists a universal constant $C$ such that
\begin{equation*}
    N(\varepsilon, s) \le C s \varepsilon^{-2}.
\end{equation*}
This upper bound demonstrates that there exists a point set $P$ with $N$ points belonging to $[0,1)^s$ such that  $D^*_N(P) \le c\sqrt{s/N}$ for some constant $c>0$. The tightness of this result was subsequently verified by Hinrichs~\cite{Hinrichs2004}, who proved a matching lower bound 
$$
N(\varepsilon, s) \ge cs/\epsilon.
$$ for sufficiently small $\epsilon$, thereby confirming the linear barrier in $s$. Elementary arguments yielding this linear-in-$s$ barrier were subsequently given by Steinerberger \cite{steinerberger2023}.

Despite strong existential results, constructing explicit deterministic point sets with optimal star discrepancy remains an open problem. Classical QMC sequences, including Sobol', Halton, rank-1 lattices, and polynomial lattices, achieve discrepancy bounds of order $(O(N^{-1}(\log N)^{s-1}))$, which deteriorate rapidly with increasing dimension $s$, despite being asymptotically better than i.i.d. sampling for fixed $s$. Although weighted spaces and component-by-component constructions provide partial remedies \cite{aistleitner2014, hinrichs2008}, the optimal $\sqrt{s/N}$ rate for the unweighted star discrepancy has not been attained. Moreover, since computing $D_N^*(P)$ is NP-hard \cite{Gnewuch2009}, practical approaches rely on randomized and heuristic methods \cite{clement2022star,clement2023computing,gross2021}, underscoring the need for new structural insights into explicit constructions.



In a recent paper, Dick and Pillichshammer \cite{dick2025inverse} made a step towards constructive solutions by investigating multiset unions of digitally shifted Korobov polynomial lattice point sets. Relying on polynomial arithmetic over finite fields and Walsh analysis, they proved that such structured sets can achieve the optimal linear dimension dependence with high probability. This result narrows the search for optimal points from the entire unit cube to a finite family of polynomial lattices.

In this paper, we extend these results to the setting of classical integer arithmetic. We investigate point sets obtained as multiset unions of randomly shifted Korobov rank-1 lattice point sets modulo a prime $N$. Unlike the polynomial case, our analysis relies on Fourier analysis on $\mathbb{Z}_N$ to control the character sums associated with the lattice points. The concept of multiple rank-1 lattices has emerged as a powerful tool, originally motivated by challenges in numerical integration in Wiener spaces \cite{D14, Goda23} and in sparse trigonometric approximation and high-dimensional Fourier sampling~\cite{gross2021,kammerer2018,kammerer2019,kammerer2021}. 

Our main contributions are as follows. We establish discrepancy bounds for four distinct scenarios: (i) random generators with continuous shifts, (ii) fixed generators with continuous shifts, (iii) random generators with discrete shifts, and (iv) fixed generators with discrete shifts. We show that in all cases, the star discrepancy decays at a rate of $O(s \log(N_{tot}) / \sqrt{N_{tot}})$, implying a quadratic dependence of $N(\epsilon, s)$ on the dimension for these constructions.

\section{Notation and Definitions}
Throughout this paper, let $N$ be a prime number. Define $\mathcal{D}_N = \{0, 1, \dots, N-1\}$ and $\Gamma = \{0,1/N,2/N,\ldots,(N-1)/N\}$ and $\bar{\Gamma} = \Gamma \bigcup \{1\}$.

\textbf{Local discrepancy} For dimension $s \in \mathbb{N}$ and an axis-parallel box $J(\boldsymbol{b}) = [\boldsymbol{0}, \boldsymbol{b}) = \prod_{j=1}^s [0, b_j)$ with $\boldsymbol{b} \in [0, 1]^s$, the local discrepancy of a point set $E$ in $J(\boldsymbol{b})$ is given by
\begin{equation}\label{eq:local}
    \mathrm{disc}(E, J(\boldsymbol{b})) := \frac{1}{|E|} \sum_{\boldsymbol{x} \in E} 1_{J(\boldsymbol{b})}(\boldsymbol{x}) - \lambda(J(\boldsymbol{b})).
\end{equation}

\textbf{Korobov Rank-1 Lattice Rules}
We consider Korobov rank-1 lattice rules modulo a prime $N$. A Korobov rank-1 lattice point set $P_N(z)$ is determined by an integer $z \in \{1, \dots, N-1\}$,
\begin{equation}\label{eq:pnz}
    P_N(z) := \left\{ \frac{n \times a(z) \pmod N} {N}  : n = 0, 1, \dots, N-1 \right\}, \nonumber
\end{equation}
where $a(z) = (1, z, z^2, \dots, z^{s-1})$.

\textbf{Random Shifts}  
To randomize the construction, we employ two types of shifts.
\begin{itemize}
		\item \textbf{Continuous Shift}: Let $\vecDelta \in [0, 1)^s$. The shifted set is defined as:\begin{equation}\label{eq:shift}
    P_N(z) \oplus \boldsymbol{\Delta} := \left\{ \{ \boldsymbol{x} + \boldsymbol{\Delta} \} : \boldsymbol{x} \in P_N(z) \right\}, \nonumber
\end{equation}
where $\{\cdot\}$ denotes the fractional part applied component-wise. If $\boldsymbol{\Delta}$ is uniformly distributed in $[0, 1)^s$, the shifted set becomes a randomization of the original lattice.
		\item \textbf{Discrete Shift}: Let $\vecdelta \in \setG^s$. The shifted set is $P_N(z) \oplus \vecdelta$. Note that since both the Korobov lattice points $P_N(z)$ and the random shift $\boldsymbol{\delta}$ belong to the grid $\Gamma^s$, the shifted point set $P_N(z) \oplus \boldsymbol{\delta}$ remains entirely within $\Gamma^s$.
	\end{itemize}
    
\section{Fourier Analysis Framework}
	
	Our analysis relies on Fourier series expansions of the indicator function $1_{J(\vecb)}$. We present the framework for both the continuous setting (for continuous shifts) and the discrete setting (for discrete shifts).
	
	\subsection{Continuous Setting}
	
	For $\boldsymbol{k} \in \mathbb{Z}^s$, the Fourier coefficient of a function $f \in L^2([0, 1]^s)$ is defined as $\hat{f}(\boldsymbol{k}) = \int_{[0, 1]^s} f(\boldsymbol{x}) e^{-2\pi i \boldsymbol{k} \cdot \boldsymbol{x}} \, \mathrm{d}\boldsymbol{x}$. 
	
	\begin{lemma} \label{lem:cont_fourier_coeffs}
Let $J(\vecb) = [\boldsymbol{0}, \vecb) = \prod_{j=1}^s [0, b_j)$ be an axis-parallel box with $\vecb \in [0,1]^s$. The indicator function $1_{J(\vecb)}(\vecx)$ has the Fourier series expansion (in the $L^2$-sense)
\begin{equation} \label{eq:fourier_series}
    1_{J(\vecb)}(\vecx) = \sum_{\veck \in \setZ^s} c_{\veck}(\vecb) e^{2\pi i \veck \cdot \vecx}, \nonumber 
\end{equation}
where the Fourier coefficients are given by $c_{\veck}(\vecb) = \prod_{j=1}^s c_{k_j}(b_j)$, with
\begin{equation} \label{eq:1d_coeffs}
    c_k(b) = \begin{cases}
        b & \text{if } k = 0, \\
        \frac{1 - e^{-2\pi i k b}}{2\pi i k} & \text{if } k \neq 0. \nonumber
    \end{cases}
\end{equation}
In particular, $c_{\boldsymbol{0}}(\vecb) = \prod_{j=1}^s b_j = \lambda(J(\vecb))$.
\end{lemma}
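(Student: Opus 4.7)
The plan is to exploit the tensor-product structure of the box and then reduce everything to a routine one-dimensional Fourier coefficient computation, followed by an appeal to standard $L^2$ Fourier theory for the convergence.

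First I would note that the indicator factorizes as $1_{J(\vecb)}(\vecx) = \prod_{j=1}^s 1_{[0, b_j)}(x_j)$, and that the multi-dimensional trigonometric system $\{e^{2\pi i \veck \cdot \vecx}\}_{\veck \in \setZ^s}$ is the tensor product of the one-dimensional systems. Consequently, it suffices to compute the one-dimensional Fourier coefficients $c_k(b) = \int_0^1 1_{[0,b)}(x)\, e^{-2\pi i k x}\, \rd x$ and then take products to obtain $c_{\veck}(\vecb) = \prod_{j=1}^s c_{k_j}(b_j)$.

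Next I would carry out the one-dimensional integral. For $k=0$, we have directly $c_0(b) = \int_0^b 1\, \rd x = b$. For $k \neq 0$, we evaluate
\begin{equation*}
c_k(b) = \int_0^b e^{-2\pi i k x}\, \rd x = \left[ \frac{e^{-2\pi i k x}}{-2\pi i k} \right]_0^b = \frac{1 - e^{-2\pi i k b}}{2\pi i k},
\end{equation*}
which matches the claimed formula. Specializing to $\veck = \boldsymbol{0}$ gives $c_{\boldsymbol{0}}(\vecb) = \prod_j b_j = \lambda(J(\vecb))$, as stated.

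Finally, for the convergence of the Fourier series I would invoke the standard fact that $1_{J(\vecb)} \in L^2([0,1]^s)$ (since it is bounded on a set of finite measure), so the trigonometric series with the computed Fourier coefficients converges to it in the $L^2$-sense by the Riesz--Fischer theorem. The only mild care required is to emphasize that the equality in the statement is an $L^2$ identity (not a pointwise one, as the indicator has jump discontinuities on the boundary of $J(\vecb)$); since the lemma is explicitly stated in the $L^2$-sense, no further work is needed. There is no serious obstacle in this proof: the one-dimensional integral is elementary, the product structure is immediate, and the convergence statement is a textbook consequence of Parseval/Riesz--Fischer.
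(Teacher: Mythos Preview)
Your proof is correct and follows essentially the same approach as the paper: factorize the indicator, reduce to the one-dimensional Fourier coefficient, and evaluate the elementary integral in the two cases $k=0$ and $k\neq 0$. Your additional sentence invoking Riesz--Fischer for the $L^2$ convergence is a small bonus; the paper's proof omits this justification entirely.
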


\begin{proof}
Since the indicator function factorizes as $1_{J(\vecb)}(\vecx) = \prod_{j=1}^s 1_{[0, b_j)}(x_j)$, the Fourier coefficients also factorize as the product of the one-dimensional coefficients. It suffices to calculate the coefficient $c_k(b)$ for the one-dimensional indicator function $1_{[0,b)}(x)$. By definition,
\begin{equation*}
    c_k(b) = \int_{0}^{1} 1_{[0,b)}(x) e^{-2\pi i k x} \, \rd x = \int_{0}^{b} e^{-2\pi i k x} \, \rd x.
\end{equation*}
For $k=0$, we immediately obtain $c_0(b) = \int_0^b 1 \, \rd x = b$. For $k \neq 0$, integration yields
\begin{equation*}
    c_k(b) = \left[ \frac{e^{-2\pi i k x}}{-2\pi i k} \right]_{0}^{b} = \frac{e^{-2\pi i k b} - 1}{-2\pi i k} = \frac{1 - e^{-2\pi i k b}}{2\pi i k}.
\end{equation*}
This completes the proof.
\end{proof}
	
	\begin{lemma} \label{lem:cont_ex_delta}
Let $E = \{\vecx_0, \dots, \vecx_{N-1}\} \subset [0,1)^s$ be an arbitrary finite point set. Let $\vecDelta$ be a random vector uniformly distributed in $[0,1)^s$. Consider the randomly shifted point set defined by $E \oplus \vecDelta := \{ \{\vecx_n + \vecDelta\} : n = 0, \dots, N-1 \}$, where $\{\cdot\}$ denotes the component-wise fractional part. Then, for any axis-parallel box $J(\vecb)$ with $\vecb \in [0,1]^s$, we have
\begin{equation*}
    \mathbb{E}_{\vecDelta} [\mathrm{disc}(E \oplus \vecDelta, J(\vecb))] = 0.
\end{equation*}
\end{lemma}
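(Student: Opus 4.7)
The plan is to prove this unbiasedness result by combining linearity of expectation with the translation-invariance of Lebesgue measure on the torus $[0,1)^s$. I would first substitute the definition of local discrepancy from equation~\eqref{eq:local} applied to the randomized set $E \oplus \boldsymbol{\Delta}$, so that
\[
\mathrm{disc}(E \oplus \boldsymbol{\Delta}, J(\vecb)) = \frac{1}{N}\sum_{n=0}^{N-1} 1_{J(\vecb)}(\{\vecx_n + \vecDelta\}) - \lambda(J(\vecb)).
\]
Linearity of expectation then reduces the claim to showing $\mathbb{E}_{\vecDelta}[1_{J(\vecb)}(\{\vecx_n + \vecDelta\})] = \lambda(J(\vecb))$ for each fixed $\vecx_n$.

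The cleanest way to justify this identity is to note that the map $\vecDelta \mapsto \{\vecx_n + \vecDelta\}$ is a measure-preserving bijection of $[0,1)^s$, so $\{\vecx_n + \vecDelta\}$ is again uniformly distributed on $[0,1)^s$, giving $\mathbb{E}_{\vecDelta}[1_{J(\vecb)}(\{\vecx_n + \vecDelta\})] = \int_{[0,1)^s} 1_{J(\vecb)}(\vecy)\,\rd\vecy = \lambda(J(\vecb))$. Summing over $n$, dividing by $N$, and subtracting $\lambda(J(\vecb))$ yields zero.

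Alternatively, since the paper's framework is Fourier analytic, I could invoke Lemma~\ref{lem:cont_fourier_coeffs} to expand $1_{J(\vecb)}$ into its Fourier series, use that $e^{2\pi i \veck \cdot \{\vecx_n + \vecDelta\}} = e^{2\pi i \veck \cdot (\vecx_n + \vecDelta)}$ because $\veck \in \setZ^s$, and then compute $\mathbb{E}_{\vecDelta}[e^{2\pi i \veck \cdot \vecDelta}] = \prod_{j=1}^s \int_0^1 e^{2\pi i k_j \Delta_j}\,\rd\Delta_j$, which equals $1$ if $\veck = \boldsymbol{0}$ and $0$ otherwise. Only the $\veck = \boldsymbol{0}$ term survives, contributing $c_{\boldsymbol{0}}(\vecb) = \lambda(J(\vecb))$, which cancels the subtracted Lebesgue measure.

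There is essentially no genuine obstacle here: the statement is a standard unbiasedness property of randomly shifted point sets. The only mild technicality, if one follows the Fourier route, is the interchange of expectation and the infinite sum, which is justified by an $L^2$ (or dominated convergence) argument applied to partial sums of the Fourier series together with the boundedness of the indicator. I would favor the direct translation-invariance argument for the cleanest exposition, while noting the Fourier derivation to foreshadow techniques used in later sections.
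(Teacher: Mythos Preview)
Your proposal is correct and your primary route---linearity of expectation followed by the translation-invariance of Lebesgue measure on the torus to show $\mathbb{E}_{\vecDelta}[1_{J(\vecb)}(\{\vecx_n + \vecDelta\})] = \lambda(J(\vecb))$---is exactly the argument the paper uses. The alternative Fourier derivation you sketch is also valid but not what the paper does here.
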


\begin{proof}
By the definition of the local discrepancy function, the expected discrepancy is given by
\begin{equation*}
\begin{aligned}
    \mathbb{E}_{\vecDelta} [\mathrm{disc}(E \oplus \vecDelta, J(\vecb))] &= \frac{1}{N} \sum_{n=0}^{N-1} \mathbb{E}_{\vecDelta} \left[ 1_{J(\vecb)}(\{\vecx_n + \vecDelta\}) \right] - \lambda(J(\vecb))\\
    & = \frac{1}{N} \sum_{n=0}^{N-1} \mathbb{E}_{\vecDelta} \left[ 1_{J(\vecb)}(\vecDelta) \right]-\lambda(J(\vecb))\\
    &=\lambda(J(\vecb))-\lambda(J(\vecb)) = 0.
\end{aligned}
\end{equation*}
This completes the proof.
\end{proof}
	
	\begin{lemma} \label{lem:con_square}
For any $\vecb \in [0,1]^s$, the Fourier coefficients of the indicator function $1_{J(\vecb)}$ satisfy
\begin{equation} \label{eq:sum_sq_coeffs}
    \sum_{\veck \in \setZ^s \setminus \{\boldsymbol{0}\}} |c_{\veck}(\vecb)|^2 = \lambda(J(\vecb)) (1 - \lambda(J(\vecb))). \nonumber
\end{equation}
\end{lemma}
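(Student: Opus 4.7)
The plan is to apply Parseval's identity (Plancherel's theorem) to the Fourier expansion established in Lemma \ref{lem:cont_fourier_coeffs}. Since $1_{J(\vecb)}$ is a bounded measurable function on $[0,1]^s$, it lies in $L^2([0,1]^s)$, so its Fourier series converges in $L^2$ and Parseval's formula applies.

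First I would compute the $L^2$ norm of $1_{J(\vecb)}$ directly. Since the indicator function is idempotent, $1_{J(\vecb)}^2 = 1_{J(\vecb)}$, and hence
\begin{equation*}
\|1_{J(\vecb)}\|_{L^2}^2 = \int_{[0,1]^s} 1_{J(\vecb)}(\vecx)\,\rd \vecx = \lambda(J(\vecb)).
\end{equation*}
Next I would invoke Parseval's identity, which gives
\begin{equation*}
\sum_{\veck \in \setZ^s} |c_{\veck}(\vecb)|^2 = \|1_{J(\vecb)}\|_{L^2}^2 = \lambda(J(\vecb)).
\end{equation*}
Finally I would isolate the $\veck = \boldsymbol{0}$ term. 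By Lemma \ref{lem:cont_fourier_coeffs}, $c_{\boldsymbol{0}}(\vecb) = \lambda(J(\vecb))$, so $|c_{\boldsymbol{0}}(\vecb)|^2 = \lambda(J(\vecb))^2$, and subtracting yields
\begin{equation*}
\sum_{\veck \in \setZ^s \setminus \{\boldsymbol{0}\}} |c_{\veck}(\vecb)|^2 = \lambda(J(\vecb)) - \lambda(J(\vecb))^2 = \lambda(J(\vecb))(1 - \lambda(J(\vecb))),
\end{equation*}
as claimed.

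There is essentially no obstacle here, as the statement is a textbook application of Parseval's theorem; the only minor point worth noting is that we need $1_{J(\vecb)} \in L^2$ and that the Fourier series of Lemma \ref{lem:cont_fourier_coeffs} is understood in the $L^2$-sense, both of which are immediate. Alternatively, one could verify the identity without invoking Parseval by computing $\prod_{j=1}^s \sum_{k_j \in \setZ} |c_{k_j}(b_j)|^2 = \prod_{j=1}^s b_j$ in one dimension via the explicit formula and then using the product structure of the coefficients, but the Parseval route is cleaner.
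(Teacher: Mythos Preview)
Your proof is correct and follows essentially the same approach as the paper: apply Parseval's identity to $1_{J(\vecb)}$, compute the $L^2$ norm as $\lambda(J(\vecb))$, and subtract the $\veck=\boldsymbol{0}$ term $|c_{\boldsymbol{0}}(\vecb)|^2=\lambda(J(\vecb))^2$.
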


\begin{proof}
    We apply Parseval's identity for $1_{J(\vecb)}(\vecx)$, which states that the $L^2$-norm of a function is equal to the sum of the squared moduli of its Fourier coefficients \cite{stein2011}:
\begin{equation*}
    \int_{[0,1]^s} |1_{J(\vecb)}(\vecx)|^2 \, \rd \vecx = \sum_{\veck \in \setZ^s} |c_{\veck}(\vecb)|^2,
\end{equation*}
which means that
\begin{equation*}
    \lambda(J(\vecb)) = |c_{\boldsymbol{0}}(\vecb)|^2 + \sum_{\veck \in \setZ^s \setminus \{\boldsymbol{0}\}} |c_{\veck}(\vecb)|^2 = (\lambda(J(\vecb)))^2 + \sum_{\veck \in \setZ^s \setminus \{\boldsymbol{0}\}} |c_{\veck}(\vecb)|^2.
\end{equation*}
Rearranging the terms completes the proof.
\end{proof}

This identity allows us to express the variance of the discrepancy in terms of the decay of Fourier coefficients and the character sums of the lattice points.
	
	\subsection{Discrete Setting}
	
	For functions defined on the grid $\setG^s$, we use the discrete Fourier transform. Let $L^2(\Gamma^s)$ denote the Hilbert space of complex-valued functions on $\Gamma^s$ equipped with the inner product:
\begin{equation} \label{eq:inner_product}
    \langle f, g \rangle_{\Gamma} := \frac{1}{N^s} \sum_{\boldsymbol{x} \in \Gamma^s} f(\boldsymbol{x}) \overline{g(\boldsymbol{x})} = \mathbb{E}_{\boldsymbol{x} \in \Gamma^s} [f(\boldsymbol{x}) \overline{g(\boldsymbol{x})}]. \nonumber
\end{equation}
The discrete Fourier coefficients of a function $f \in L^2(\Gamma^s)$ are defined for frequencies $\boldsymbol{k} \in \mathcal{D}_N^s = \{0, 1, \dots, N-1\}^s$ as:
\begin{equation} \label{eq:dft_def}
    C_{\boldsymbol{k}}(f) := \langle f, e_{\boldsymbol{k}} \rangle_{\Gamma} = \frac{1}{N^s} \sum_{\boldsymbol{x} \in \Gamma^s} f(\boldsymbol{x}) e^{-2\pi i \boldsymbol{k} \cdot \boldsymbol{x}},
\end{equation}
where $e_{\boldsymbol{k}}(\boldsymbol{x}) = e^{2\pi i \boldsymbol{k} \cdot \boldsymbol{x}}$. The function can be reconstructed via the inverse transform:
\begin{equation*}
    f(\boldsymbol{x}) = \sum_{\boldsymbol{k} \in \mathcal{D}_N^s} C_{\boldsymbol{k}}(f) e^{2\pi i \boldsymbol{k} \cdot \boldsymbol{x}}.
\end{equation*}
Crucially, the discrete Parseval identity holds \cite{stein2011}:
\begin{equation} \label{eq:discrete_parseval}
    \sum_{\boldsymbol{k} \in \mathcal{D}_N^s} |C_{\boldsymbol{k}}(f)|^2 = \mathbb{E}_{\boldsymbol{x} \in \Gamma^s} [|f(\boldsymbol{x})|^2]. 
\end{equation}

\begin{lemma}\label{lem:disc_fourier_coeff}
    Let $J(\boldsymbol{b}) = [\bm{0}, \vecb)$ with $\vecb \in \Gamma^s$. The discrete indicator function $1_{J(\boldsymbol{b})}(\boldsymbol{x})$ with $\boldsymbol{x} \in \Gamma^s$ has the discrete Fourier series expansion (in the $L^2$-sense):
    \begin{equation} \label{eq:indicator_dft}
    1_{J(\boldsymbol{b})}(\boldsymbol{x}) = \sum_{\boldsymbol{k} \in \mathcal{D}_N^s} C_{\boldsymbol{k}}(\vecb) e^{2\pi i \boldsymbol{k} \cdot \boldsymbol{x}}, \nonumber
    \end{equation}
    where $\mathcal{D}_N^s = \{0, 1, \dots, N-1\}^s$ and the Fourier coefficients are given by $C_{\veck}(\vecb) = \prod_{j=1}^sC_{k_j}(b_j)$ with
    \begin{equation}
        C_{k}(b) = 
        \begin{cases} 
            b & \text{if } k = 0, \\
            \displaystyle \frac{1}{N} \frac{1 - e^{-2\pi i k b}}{1 - e^{-2\pi i k / N}} & \text{if } k \neq 0.
        \end{cases}
    \end{equation}
\end{lemma}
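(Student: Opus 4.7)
The plan is to mirror the continuous computation in Lemma \ref{lem:cont_fourier_coeffs}, replacing the Lebesgue integral by the discrete inner product $\langle \cdot , \cdot \rangle_{\Gamma}$. First I would invoke the tensor-product structure: since both $1_{J(\vecb)}(\vecx) = \prod_{j=1}^s 1_{[0,b_j)}(x_j)$ and the character $e^{-2\pi i \veck\cdot\vecx}$ factor across coordinates, the sum over $\Gamma^s$ defining $C_{\veck}(\vecb)$ in \eqref{eq:dft_def} splits as a product of one-dimensional sums, yielding $C_{\veck}(\vecb) = \prod_{j=1}^s C_{k_j}(b_j)$. The expansion \eqref{eq:indicator_dft} itself then follows from the standard fact that $\{e_{\veck}\}_{\veck\in\mathcal{D}_N^s}$ is an orthonormal basis of $L^2(\Gamma^s)$ with respect to this inner product (character orthogonality on the finite abelian group $\mathbb{Z}_N^s$), so the problem reduces entirely to the one-dimensional coefficient computation.

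For the 1D computation I would write $b = m/N$ for the unique $m \in \{0,1,\dots,N-1\}$; then $1_{[0,b)}(n/N) = 1$ exactly when $n \in \{0,\dots,m-1\}$, so
\begin{equation*}
C_k(b) = \frac{1}{N}\sum_{n=0}^{m-1} e^{-2\pi i k n/N}.
\end{equation*}
The case $k=0$ collapses to $m/N = b$. For $k \in \{1,\dots,N-1\}$ the root of unity $e^{-2\pi i k/N}$ differs from $1$, so the standard finite geometric series formula yields
\begin{equation*}
C_k(b) = \frac{1}{N}\cdot\frac{1 - e^{-2\pi i k m/N}}{1 - e^{-2\pi i k/N}} = \frac{1}{N}\cdot\frac{1 - e^{-2\pi i k b}}{1 - e^{-2\pi i k/N}},
\end{equation*}
matching the claimed formula.

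There is no substantive obstacle: the lemma is the direct discrete analogue of the elementary integration in Lemma \ref{lem:cont_fourier_coeffs}. The one point worth flagging for later use is the structural contrast between the two formulas: the continuous denominator $2\pi i k$ is replaced here by $N(1 - e^{-2\pi i k/N}) = 2 i N\sin(\pi k/N)\, e^{-i\pi k/N}$. The two denominators agree to leading order when $|k|/N$ is small, but the discrete factor saturates at $\Theta(1)$ when $k$ is close to $N/2$ (and wraps around for $k$ close to $N$). Subsequent bounds on $|C_k(b)|$ will therefore need to use the sharp identity $|1 - e^{-2\pi i k/N}| = 2|\sin(\pi k/N)|$ rather than the naive translation from $|2\pi i k|$, which is the only nontrivial item this lemma contributes to the later Parseval-based variance estimates.
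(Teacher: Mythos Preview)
Your proposal is correct and follows essentially the same route as the paper: reduce to the one-dimensional case via the tensor-product factorization, write $b=m/N$, and evaluate $C_k(b)=\frac{1}{N}\sum_{n=0}^{m-1}e^{-2\pi i kn/N}$ directly for $k=0$ and via the finite geometric series for $k\neq 0$. The closing paragraph about comparing $2\pi i k$ with $N(1-e^{-2\pi i k/N})$ is extraneous to this lemma and is in fact never used later in the paper---the subsequent variance estimates rely only on the discrete Parseval identity (Lemma~\ref{lem:disc_square}), not on pointwise bounds for $|C_k(b)|$---so you can safely drop it.
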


\begin{proof}
    Since $1_{J(\boldsymbol{b})}(\boldsymbol{x}) = \prod_{j=1}^s 1_{[0, b_j)}(x_j)$, we focus on the one dimensional case. Let $b = m/N$ for some integer $0 \le m \le N$. By definition \eqref{eq:dft_def}, we have
\begin{equation*}
    C_k(b) = \frac{1}{N}\sum_{x\in \Gamma} 1_{[0,b)}(x) e^{-2\pi i k x} = \frac{1}{N} \sum_{n=0}^{m-1} \left( e^{-2\pi i k / N} \right)^n.
\end{equation*}
For $k=0$, we obtain $C_0(b) = \frac{1}{N} \sum_{n=0}^{m-1} 1 = m/N = b$. For $k \neq 0$, the sum is a geometric series with ratio $q = e^{-2\pi i k / N} \neq 1$, we have
\begin{equation*}
    C_k(b) = \frac{1}{N} \frac{1 - (e^{-2\pi i k / N})^m}{1 - e^{-2\pi i k / N}} = \frac{1}{N} \frac{1 - e^{-2\pi i k b}}{1 - e^{-2\pi i k / N}}.
\end{equation*}
This completes the proof.
\end{proof}

\begin{lemma} \label{lem:disc_ex_delta}
Let $E = \{\vecx_0, \dots, \vecx_{N-1}\} \subset [0,1)^s$ be an arbitrary finite point set. Let $\bm{\delta}$ be a random vector uniformly distributed in $\Gamma^s$. Then, for any axis-parallel box $J(\vecb)$ with $\vecb \in \bar\Gamma^s$, we have
\begin{equation*}
    \mathbb{E}_{\bm{\delta}} [\mathrm{disc}(E \oplus \bm{\delta}, J(\vecb))] = 0.
\end{equation*}
\end{lemma}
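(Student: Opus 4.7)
The plan is to mirror the argument of Lemma~\ref{lem:cont_ex_delta}, exploiting the fact that the hypothesis $\vecb \in \bar\Gamma^s$ makes the box edges align with the grid on which $\bm{\delta}$ is supported. First, by linearity of expectation and the definition of the local discrepancy in \eqref{eq:local}, I would write
\begin{equation*}
\mathbb{E}_{\bm{\delta}}[\mathrm{disc}(E \oplus \bm{\delta}, J(\vecb))] = \frac{1}{N}\sum_{n=0}^{N-1} \mathbb{E}_{\bm{\delta}}\bigl[1_{J(\vecb)}(\{\vecx_n + \bm{\delta}\})\bigr] - \lambda(J(\vecb)),
\end{equation*}
so it suffices to prove that, for every fixed $\vecx_n \in [0,1)^s$, the inner expectation equals $\lambda(J(\vecb))$.

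Next, since the coordinates $\delta_1,\ldots,\delta_s$ of $\bm{\delta}$ are independent and each uniformly distributed on $\Gamma$, and since $1_{J(\vecb)}$ factorizes as a product of one-dimensional indicators, the expectation factorizes too. Thus it is enough to establish the one-dimensional statement: for any $x \in [0,1)$ and any $b = l/N$ with $l \in \{0,1,\ldots,N\}$,
\begin{equation*}
\mathbb{E}_{\delta}\bigl[1_{[0,b)}(\{x+\delta\})\bigr] = b,
\end{equation*}
where $\delta$ is uniform on $\Gamma$.

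To verify this, I would decompose $x = m/N + r$ with $m \in \{0,\ldots,N-1\}$ and $r \in [0, 1/N)$. Then as $\delta = k/N$ ranges over $\Gamma$, the value $\{x+\delta\}$ equals $((m+k)\bmod N)/N + r$, so the $N$ outcomes form precisely the set $\{r + j/N : j = 0, \ldots, N-1\}$, each occurring once. Because $r < 1/N$, the point $r + j/N$ lies in $[0, l/N)$ if and only if $j < l$, giving exactly $l$ favorable outcomes and probability $l/N = b$. Taking the product over coordinates and averaging over $n$ then yields zero discrepancy in expectation.

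The main (mild) subtlety is the equidistribution step above: shifting an arbitrary real-valued $x$ by the discrete grid must hit each grid-aligned subinterval $[j/N,(j+1)/N)$ exactly once, and the box edge must match $l/N$ exactly so that no partial interval is counted. This is precisely where the restriction $\vecb \in \bar\Gamma^s$ is essential; without it, a boundary defect of order $1/N$ would appear and the expectation would no longer vanish.
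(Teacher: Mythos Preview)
Your proof is correct and follows the same approach as the paper: reduce via linearity to showing $\mathbb{E}_{\bm{\delta}}[1_{J(\vecb)}(\{\vecx_n+\bm{\delta}\})]=\lambda(J(\vecb))$ for each $n$, then use that the grid shifts equidistribute over the $N$ subintervals so that exactly $l$ of them land in $[0,l/N)$. In fact, your explicit one-dimensional computation handles arbitrary $\vecx_n\in[0,1)$, whereas the paper simply asserts $\mathbb{E}_{\bm{\delta}}[1_{J(\vecb)}(\{\vecx_n+\bm{\delta}\})]=\mathbb{E}_{\bm{\delta}}[1_{J(\vecb)}(\bm{\delta})]$ without spelling out this equidistribution step.
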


\begin{proof}
By the definition of the local discrepancy function \eqref{eq:local}, the expected discrepancy is given by
$$
\mathbb{E}_{\bm{\delta}} [\mathrm{disc}(E \oplus \bm{\delta}, J(\vecb))] = \frac{1}{N} \sum_{n=0}^{N-1} \mathbb{E}_{\bm{\delta}} \left[ 1_{J(\vecb)}(\{\vecx_n + \bm{\delta}\}) \right] - \lambda(J(\vecb)).
$$
Since $\bm{\delta} \in \Gamma^s$ and 
$\vecb \in \bar\Gamma^s$, $\mathbb{E}_{\bm{\delta}} \left[ 1_{J(\vecb)}(\{\vecx_n + \bm{\delta}\}) \right] = \mathbb{E}_{\bm{\delta}} \left[ 1_{J(\vecb)}(\bm{\delta}) \right] = \lambda(J(\vecb))$. This completes the proof.
\end{proof}

\begin{lemma}\label{lem:disc_square}
    For any $\vecb \in \bar\Gamma^s$, the discrete Fourier coefficients of the indicator function $1_{J(\vecb)}$ satisfy
    \begin{equation*} 
        \sum_{\boldsymbol{k} \in \mathcal{D}_N^s \setminus \{\boldsymbol{0}\}} |C_{\boldsymbol{k}}(\boldsymbol{b})|^2 = \lambda(J(\boldsymbol{b})) \left( 1 - \lambda(J(\boldsymbol{b})) \right).
    \end{equation*}
\end{lemma}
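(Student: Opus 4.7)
The proof will be a direct discrete analog of Lemma \ref{lem:con_square}, now using the discrete Parseval identity \eqref{eq:discrete_parseval} in place of the continuous one. The plan is to (i) apply Parseval to the function $f = 1_{J(\vecb)}$ restricted to the grid $\Gamma^s$, (ii) simplify the right-hand side using that an indicator satisfies $|f|^2 = f$, (iii) compute the resulting average over $\Gamma^s$ exactly using the assumption $\vecb \in \bar\Gamma^s$, and (iv) isolate the $\veck = \bm{0}$ term, whose value was computed in Lemma \ref{lem:disc_fourier_coeff}.

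In detail, I would first invoke \eqref{eq:discrete_parseval} to write
\begin{equation*}
    \sum_{\veck \in \setD_N^s} |C_{\veck}(\vecb)|^2 \;=\; \mathbb{E}_{\vecx \in \setG^s}\bigl[|1_{J(\vecb)}(\vecx)|^2\bigr] \;=\; \mathbb{E}_{\vecx \in \setG^s}\bigl[1_{J(\vecb)}(\vecx)\bigr].
\end{equation*}
The key observation making the discrete setting work cleanly is the assumption $\vecb \in \bar\Gamma^s$: writing $b_j = m_j/N$ with $m_j \in \{0,1,\dots,N\}$, the one-dimensional set $\Gamma \cap [0, b_j)$ has exactly $m_j = N b_j$ elements, so by independence across coordinates
\begin{equation*}
    \mathbb{E}_{\vecx \in \setG^s}\bigl[1_{J(\vecb)}(\vecx)\bigr] \;=\; \frac{|\setG^s \cap J(\vecb)|}{N^s} \;=\; \prod_{j=1}^s b_j \;=\; \lambda(J(\vecb)).
\end{equation*}

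Separating the $\veck = \bm{0}$ term, Lemma \ref{lem:disc_fourier_coeff} gives $C_{\bm{0}}(\vecb) = \lambda(J(\vecb))$, so $|C_{\bm{0}}(\vecb)|^2 = (\lambda(J(\vecb)))^2$, and rearranging yields
\begin{equation*}
    \sum_{\veck \in \setD_N^s \setminus \{\bm{0}\}} |C_{\veck}(\vecb)|^2 \;=\; \lambda(J(\vecb)) - (\lambda(J(\vecb)))^2 \;=\; \lambda(J(\vecb))\bigl(1 - \lambda(J(\vecb))\bigr),
\end{equation*}
as claimed.

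There is really no deep obstacle here; the only subtle point is the role of the hypothesis $\vecb \in \bar\Gamma^s$. If $\vecb$ were allowed arbitrary entries in $[0,1]^s$, the count $|\setG \cap [0,b_j)|$ would be $\lceil N b_j \rceil$ rather than $N b_j$, and the grid average $\mathbb{E}_{\vecx \in \setG^s}[1_{J(\vecb)}(\vecx)]$ would no longer equal the Lebesgue measure $\lambda(J(\vecb))$, breaking the identity. Thus the main conceptual step is to recognize that the hypothesis $\vecb \in \bar\Gamma^s$ is precisely what equates the grid-average of the indicator with its Lebesgue volume, after which the proof reduces to Parseval plus arithmetic.
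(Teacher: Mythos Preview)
Your proof is correct and follows essentially the same approach as the paper: apply the discrete Parseval identity \eqref{eq:discrete_parseval}, use $|1_{J(\vecb)}|^2 = 1_{J(\vecb)}$, identify the grid average with $\lambda(J(\vecb))$, and subtract the $\veck=\bm{0}$ term. Your write-up is in fact slightly more explicit than the paper's in explaining why the hypothesis $\vecb \in \bar\Gamma^s$ makes the grid average equal the Lebesgue volume, which the paper simply asserts.
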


\begin{proof}
    By \eqref{eq:discrete_parseval}, we apply the discrete Parseval identity to the indicator function $ 1_{J(\boldsymbol{b})}(\boldsymbol{x})$:
    \begin{equation}
        \sum_{\boldsymbol{k} \in \mathcal{D}_N^s} |C_{\boldsymbol{k}}(\vecb)|^2 = \mathbb{E}_{\boldsymbol{x} \in \Gamma^s} [|1_{J(\boldsymbol{b})}(\boldsymbol{x})|^2] = \mathbb{E}_{\boldsymbol{x} \in \Gamma^s} [1_{J(\boldsymbol{b})}(\boldsymbol{x})] = \lambda(J(\boldsymbol{b})). \nonumber
    \end{equation}
    Hence, 
    \begin{equation}
        \sum_{\boldsymbol{k} \in \mathcal{D}_N^s \setminus \{\boldsymbol{0}\}} |C_{\boldsymbol{k}}(\vecb)|^2 = \sum_{\boldsymbol{k} \in \mathcal{D}_N^s} |C_{\boldsymbol{k}}(\vecb)|^2 - |C_{\boldsymbol{0}}(\vecb)|^2 = \lambda(J(\boldsymbol{b})) \left( 1 - \lambda(J(\boldsymbol{b})) \right). \nonumber
    \end{equation}
    This completes the proof.
\end{proof}

\section{Arithmetic Properties of Korobov Lattice Point Sets}
	
The performance of the Korobov lattice point sets depends on the character sums $S_N(z, \veck)$, which is defined by $$ S_N(z, \veck) := \frac{1}{N} \sum_{n=0}^{N-1} e^{2\pi i \frac{n}{N} (\veck \cdot a(z))} = \indicator{\veck \cdot a(z) \equiv 0 \pmod N}. $$

Let $K_{1}$ denote $( \setZ^s \setminus (N\setZ)^s ) \setminus \{\overline{\bm{0}} \}$ which we, for simplicity, identify with the set $\setD_N^s \setminus \{\boldsymbol{0}\}$. For $\veck \in K_{1}$, the polynomial $P_{\veck}(z) = \sum_{j=1}^s k_j z^{j-1}$ is not the zero polynomial modulo $N$.
	
	\begin{lemma}\label{lem:ex_sn}
		Let $z$ be chosen uniformly from $\{1, \dots, M\}$. Then for any $\veck \in K_{1}$,
		$$ 
        \EE_z [|S_N(z, \veck)|^2] \le \frac{s-1}{M}.
        $$
	\end{lemma}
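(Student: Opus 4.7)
The plan is to exploit the fact that $S_N(z,\veck)$ is an indicator (hence idempotent), so $|S_N(z,\veck)|^2 = S_N(z,\veck) = \indicator{P_{\veck}(z)\equiv 0 \pmod N}$. This turns the expectation into a root-counting problem for the polynomial $P_{\veck}$ over $\setZ/N\setZ$.

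First I would write
\begin{equation*}
\EE_z[|S_N(z,\veck)|^2] \;=\; \frac{1}{M}\#\{z\in\{1,\dots,M\}:\ P_{\veck}(z)\equiv 0 \pmod N\}.
\end{equation*}
Second, since $\veck \in K_1 = (\setZ^s\setminus(N\setZ)^s)\setminus\{\bm 0\}$, at least one coefficient $k_j$ is not divisible by $N$, so $P_{\veck}(z)=\sum_{j=1}^s k_j z^{j-1}$ is a nonzero polynomial in $(\setZ/N\setZ)[z]$ of degree at most $s-1$. Third, because $N$ is prime, $\setZ/N\setZ$ is a field, and Lagrange's theorem guarantees that a nonzero polynomial of degree $\le s-1$ over a field has at most $s-1$ roots. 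Thus the set of residues $z\bmod N$ annihilating $P_{\veck}$ has cardinality at most $s-1$, which (assuming the range of $z$ lies in a single residue window, i.e.\ $M\le N$, as is the case in the intended application where $z\in\{1,\dots,N-1\}$) gives at most $s-1$ admissible $z\in\{1,\dots,M\}$ and hence the claimed bound $(s-1)/M$.

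The argument is essentially one line once the idempotence is observed, so I do not foresee any serious obstacle. The only point that requires a brief comment is why $P_{\veck}$ is genuinely nonzero mod $N$: this is precisely the purpose of removing $(N\setZ)^s$ from the admissible frequency set in the definition of $K_1$, which ensures at least one coefficient $k_j\not\equiv 0\pmod N$ so that $\deg P_{\veck}\ge 0$ but the polynomial is not the zero element of $(\setZ/N\setZ)[z]$. If one wanted to handle $M>N$, the same counting gives the weaker bound $(s-1)\lceil M/N\rceil/M$, but for the intended regime $M\le N$ the stated inequality is immediate.
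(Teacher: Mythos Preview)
Your argument is correct and coincides with the paper's proof: both reduce $\EE_z[|S_N(z,\veck)|^2]$ to counting roots of the nonzero polynomial $P_{\veck}$ over the field $\setZ/N\setZ$ and invoke the degree bound (the paper calls it the Fundamental Theorem of Algebra, you call it Lagrange's theorem). Your explicit remarks on idempotence, on why $\veck\in K_1$ forces $P_{\veck}\not\equiv 0$, and on the implicit hypothesis $M\le N$ are welcome clarifications but do not change the approach.
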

    
	\begin{proof}
		Since $\veck \in K_{1}$, the polynomial $\sum_{j=1}^s k_j z^{j-1} \equiv 0 \pmod N$ has at most $s-1$ solutions in the field $\setZ_N$ by the Fundamental Theorem of Algebra. Thus,
\begin{equation*}
    \EE_z [|S_N(z, \veck)|^2] = \frac{1}{M} \sum_{z=1}^M \indicator{\veck \cdot a(z) \equiv 0 \pmod N}\le \frac{s-1}{M}.
\end{equation*}
This completes the proof.
	\end{proof}
	
\begin{lemma}\label{lem:sum_sn}
Consider the set of all generators $z \in \{1, \dots, M\}$. Then for any $\veck \in K_{1}$,
		$$ 
        \sum_{r=1}^M |S_N(r, \veck)|^2 = \sum_{r=1}^M \indicator{\veck \cdot a(r) \equiv 0 \pmod N} \le s-1.
        $$
	\end{lemma}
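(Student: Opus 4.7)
The plan is to observe that this lemma is essentially the un-averaged version of Lemma \ref{lem:ex_sn}, and proceeds by the same polynomial-roots argument with one extra remark about the range of $r$ versus $N$. In particular, there is no real analytical content beyond reusing the evaluation of the geometric sum in the definition of $S_N(z,\veck)$.

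First, I would recall from the definition of $S_N$ that $S_N(r,\veck)$ is the normalized geometric sum $\tfrac{1}{N}\sum_{n=0}^{N-1} e^{2\pi i n (\veck\cdot a(r))/N}$, which evaluates to $1$ when $\veck\cdot a(r)\equiv 0\pmod N$ and to $0$ otherwise. Hence $S_N(r,\veck)\in\{0,1\}$, so $|S_N(r,\veck)|^2 = S_N(r,\veck) = \indicator{\veck\cdot a(r)\equiv 0\pmod N}$. This justifies the first equality in the statement.

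Second, I would interpret the indicator as a root-counting statement for the polynomial $P_{\veck}(z) = \sum_{j=1}^s k_j z^{j-1}$ over $\mathbb{Z}_N$. Since $\veck\in K_1$, at least one coefficient $k_j$ is nonzero modulo $N$, so $P_{\veck}$ is a nonzero polynomial of degree at most $s-1$. Because $N$ is prime, $\mathbb{Z}_N$ is a field, and a nonzero polynomial of degree at most $s-1$ over a field has at most $s-1$ roots (the Fundamental Theorem of Algebra for fields, already invoked in the proof of Lemma \ref{lem:ex_sn}). Since the generators $r\in\{1,\ldots,M\}$ lie in (or inject into) $\mathbb{Z}_N$ under reduction modulo $N$ — we are in the Korobov regime $M\le N-1$ — the number of $r\in\{1,\ldots,M\}$ satisfying the congruence is bounded above by the number of roots of $P_{\veck}$ in $\mathbb{Z}_N$, which is at most $s-1$. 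Summing the indicators then gives the claimed bound.

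There is no substantive obstacle here: the only subtlety is the tacit assumption $M\le N$, which makes the residue map $\{1,\ldots,M\}\to\mathbb{Z}_N$ injective so that the polynomial root bound transfers directly. If desired, one could state this assumption explicitly, or otherwise replace $s-1$ by $\lceil M/N\rceil(s-1)$; in the intended application ($M\le N-1$) this distinction is immaterial.
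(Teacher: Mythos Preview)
Your proposal is correct and follows exactly the argument the paper intends: the paper's own proof is the single sentence ``This follows immediately from the same argument as Lemma~\ref{lem:ex_sn},'' and you have simply written that argument out in full, including the helpful remark that $M\le N-1$ is what makes the injection into $\mathbb{Z}_N$ work.
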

    
	\begin{proof}
		This follows immediately from the same argument as Lemma \ref{lem:ex_sn}.
	\end{proof}

\section{Main results}
	We present four theorems covering the combinations of generator selection (Random vs. Fixed) and shift type (Continuous vs. Discrete). 

    \begin{table}[h]
		\centering
		\begin{tabular}{@{}lll@{}}
			\toprule
			Generator $z$ & Shift Type & Result \\ \midrule
			Random $z_1, \dots, z_M$ & Continuous $\vecDelta \in [0,1)^s$ & Theorem \ref{thm:main_result1} \\
            Fixed $1, \dots, M$ & Continuous $\vecDelta \in [0,1)^s$ & Theorem \ref{thm:main_result2} \\
            Random $z_1, \dots, z_M$ & Discrete $\vecdelta \in \setG^s$ & Theorem \ref{thm:main_result3}\\
			Fixed $1, \dots, M$ & Discrete $\vecdelta \in \setG^s$ & Theorem \ref{thm:main_result4}
			 \\ \bottomrule
		\end{tabular}
		\caption{Summary of main results.}
	\end{table}
    
\begin{theorem}[Random $z$, Continuous Shift] \label{thm:main_result1}
Let $N$ be a prime, $M=N-1$, and the total number of points be $N_{tot} = M \cdot N$ (counting multiplicity). Let $z_1, \dots, z_M$ be i.i.d. uniform in $\{1, \dots, M \}$ and $\vecDelta_1, \dots, \vecDelta_M$ be i.i.d. uniform in $[0,1)^s$. Define the multiset union $P := \bigcup_{r=1}^{M} (P_N(z_r) \oplus \vecDelta_r)$.
Then, for every $\delta \in (0,1)$, with probability at least $\delta$, the star discrepancy of $P$ satisfies
\begin{equation} \label{eq:discrepancy_boundp}
    D_{N_{\text{tot}}}^*(P) \le 1.83 \times \frac{s\log(N+1) + \log 2 - \log(1-\delta) }{M}. \nonumber
\end{equation}
\end{theorem}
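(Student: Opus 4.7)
The plan is to express the local discrepancy as an average of $M$ i.i.d.\ random variables, control its variance via Fourier orthogonality, apply a Bernstein-type tail bound, and then take a union bound over a grid discretisation of $\vecb \in [0,1]^s$.

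First, I would substitute the Fourier expansion of Lemma~\ref{lem:cont_fourier_coeffs} for the indicator $1_{J(\vecb)}$ and recognise the resulting inner sum as the character sum $S_N(z_r,\veck)$. Writing $Y_r(\vecb) := \sum_{\veck \in \setZ^s \setminus \{\bm{0}\}} c_\veck(\vecb)\, S_N(z_r,\veck)\, e^{2\pi i \veck \cdot \vecDelta_r}$, this gives $\mathrm{disc}(P, J(\vecb)) = \frac{1}{M}\sum_{r=1}^M Y_r(\vecb)$. The random variables $Y_r(\vecb)$ are i.i.d.\ across $r$, bounded in magnitude by $1$ almost surely, and mean-zero by Lemma~\ref{lem:cont_ex_delta}.

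Next, I would control the variance $\sigma^2(\vecb) := \EE[Y_r(\vecb)^2]$. Taking expectation over $\vecDelta_r$ first, orthogonality of the characters collapses the double sum over frequencies to the diagonal, leaving $\sigma^2(\vecb) = \sum_{\veck \neq \bm{0}} |c_\veck(\vecb)|^2 \, \EE_z[S_N(z_r,\veck)^2]$. I would then split the frequencies into two natural classes: on $K_{1}$, Lemma~\ref{lem:ex_sn} gives $\EE_z[S_N^2] \le (s-1)/M$; on the sublattice $(N\setZ)^s \setminus \{\bm{0}\}$, $S_N \equiv 1$ identically but the coefficients satisfy $|c_{Nl}(b)| \le 1/(\pi N|l|)$ and sum to $O(s/N^2)$. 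Combined with $\sum_{\veck\neq\bm{0}} |c_\veck(\vecb)|^2 \le 1/4$ from Lemma~\ref{lem:con_square}, this yields $\sigma^2(\vecb) = O(s/M)$ uniformly in $\vecb$.

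Third, a Bernstein-type concentration inequality gives, for each fixed $\vecb$,
$$\PP{|\mathrm{disc}(P, J(\vecb))| > t} \le 2\exp\!\left(-\frac{Mt^2/2}{\sigma^2(\vecb) + t/3}\right),$$
whose exponent behaves like $-Mt$ in the regime $t \gtrsim s/M$. I would then discretise $\vecb$ over the grid $\bar\setG^s$ of size $(N+1)^s$; for arbitrary $\vecb \in [0,1]^s$, the monotone sandwich $\vecb^- \le \vecb \le \vecb^+$ in $\bar\setG^s$ costs at most $s/N$ in the local discrepancy. A union bound over the $(N+1)^s$ grid points combined with the Bernstein tail, requiring total failure probability at most $1-\delta$, forces $t = O((s\log(N+1) + \log 2 - \log(1-\delta))/M)$; adding the slippage $s/N \le s/M$ is absorbed by the main term and yields the claimed bound.

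The main obstacle is the variance bound in the second step: on the sublattice $(N\setZ)^s$ the character sum $S_N$ is identically one, so this contribution enjoys no gain from the randomness of $z_r$ and must instead be controlled separately through the decay of the Fourier coefficients. Once the $O(s/M)$ variance bound is secured, the remaining ingredients---Bernstein concentration, net discretisation, and monotone sandwich---are essentially standard, with the specific numerical constant $1.83$ emerging from careful tracking of the constants in the concentration inequality and the sandwich slippage.
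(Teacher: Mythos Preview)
Your proposal is correct and mirrors the paper's proof essentially step for step: the same Fourier decomposition of the local discrepancy into i.i.d.\ summands $Y_r(\vecb)$, the same variance bound via Parseval with the crucial frequency split between $K_1$ (where Lemma~\ref{lem:ex_sn} applies) and the sublattice $(N\setZ)^s\setminus\{\boldsymbol 0\}$ (where $S_N\equiv 1$ but $|c_{N\vecl}(\vecb)|$ decays), and the same Bernstein tail plus union bound over the grid $\bar\Gamma^s$ with the $s/N$ sandwich cost. The only cosmetic difference is that the paper, when proving the variance bound (its Lemma~6.1), explicitly separates the coordinates with $b_j=1$ via a subset $\vecu\subseteq\{1,\dots,s\}$ to ensure the remainder estimate $\mathcal R_N(\vecb)<s/(3N^2)$ holds cleanly; you should keep this in mind when executing the sublattice step.
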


\begin{theorem}[Fixed $z$, Continuous Shift]\label{thm:main_result2}
Let $N$ be a prime, $M=N-1$, and the total number of points be $N_{tot} = M \cdot N$ (counting multiplicity). Let the generators be fixed as $\{1, \dots, M\}$. Let $\vecDelta_1, \dots, \vecDelta_M$ be i.i.d. uniform in $[0,1)^s$. Define $P_{f} := \bigcup_{r=1}^M (P_N(r) \oplus \vecDelta_r)$.
		Then, for every $\delta \in (0,1)$, with probability at least $1-\delta$, the star discrepancy of $P_{f}$ satisfies
		$$ D_{N_{tot}}^*(P_{f}) \le 1.83 \times \frac{s\log(N+1) + \log 2 - \log(1-\delta) }{M}. $$
	\end{theorem}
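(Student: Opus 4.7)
The proof follows the template of Theorem \ref{thm:main_result1}, but with Lemma \ref{lem:sum_sn} (a deterministic bound on $\sum_r |S_N(r,\veck)|^2$) taking over the role of Lemma \ref{lem:ex_sn} (the averaged bound over random $z$). The conceptual point is that even with fixed generators, the independent continuous shifts $\vecDelta_1,\dots,\vecDelta_M$ alone provide enough randomness to concentrate the local discrepancy, because the second-moment input to the concentration argument is the same in both cases.

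Concretely, the plan has four steps. \emph{First}, I reduce to a discrete grid: for any $\vecb\in[0,1]^s$, I choose $\vecb^\pm\in\bar\Gamma^s$ with $\vecb^-\le\vecb\le\vecb^+$ and $b_j^+-b_j^-\le 1/N$; monotonicity of the empirical count combined with $\lambda(J(\vecb^+))-\lambda(J(\vecb^-))\le s/N$ yields
$$\sup_{\vecb\in[0,1]^s}|\mathrm{disc}(P_f,J(\vecb))| \le \max_{\vecb'\in\bar\Gamma^s}|\mathrm{disc}(P_f,J(\vecb'))| + s/N,$$
and the additive $s/N$ is absorbed into the target bound. \emph{Second}, for a fixed $\vecb\in\bar\Gamma^s$, I decompose $\mathrm{disc}(P_f,J(\vecb)) = \frac{1}{M}\sum_{r=1}^M Y_r$ with $Y_r:=\mathrm{disc}(P_N(r)\oplus\vecDelta_r,J(\vecb))$; the $Y_r$ are independent, mean-zero by Lemma \ref{lem:cont_ex_delta}, and bounded by $1$ in absolute value. \emph{Third}, I expand $Y_r$ using Lemma \ref{lem:cont_fourier_coeffs} and exploit the crucial fact that, since $b_jN\in\mathbb{Z}$ for $\vecb\in\bar\Gamma^s$, the Fourier coefficient $c_\veck(\vecb)$ vanishes on $(N\mathbb{Z})^s\setminus\{\boldsymbol{0}\}$. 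Hence only frequencies $\veck\in K_1$ contribute to the variance, and
$$\sum_{r=1}^M \mathrm{Var}(Y_r) = \sum_{\veck\in K_1}|c_\veck(\vecb)|^2\sum_{r=1}^M |S_N(r,\veck)|^2 \le (s-1)\sum_{\veck\in K_1}|c_\veck(\vecb)|^2 \le (s-1)/4$$
by Lemmas \ref{lem:sum_sn} and \ref{lem:con_square}. \emph{Fourth}, Bernstein's inequality applied to $\frac{1}{M}\sum_r Y_r$ with $\sigma^2\le(s-1)/(4M)$ and $|Y_r|\le 1$, followed by a union bound over the $(N+1)^s$ boxes indexed by $\bar\Gamma^s$ and a careful choice of $t$, delivers the claimed threshold, with the numerical prefactor $1.83$ emerging from optimization of the Bernstein constants.

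The main obstacle is upgrading from the $1/\sqrt{M}$ rate of Hoeffding to the $1/M$ rate of Bernstein. This is only possible because the variance is small ($\sigma^2=O(s/M)$ instead of $O(1)$), which in turn rests on two things: the vanishing of the Fourier coefficients at grid-adapted $\vecb$, which removes the trivial resonances $\veck\in(N\mathbb{Z})^s\setminus\{\boldsymbol{0}\}$, and the deterministic arithmetic bound $\sum_r|S_N(r,\veck)|^2\le s-1$ of Lemma \ref{lem:sum_sn}, which controls the remaining resonances. Once this small-variance estimate is in place, Bernstein in its large-deviation regime produces the $1/M$ rate, and the rest is a mechanical adaptation of the argument used to prove Theorem \ref{thm:main_result1}.
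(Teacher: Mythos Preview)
Your proposal is correct and follows the paper's own template closely: the same grid reduction to $\bar\Gamma^s$, the same decomposition $\mathrm{disc}(P_f,J(\vecb))=\frac{1}{M}\sum_r\mathcal{Y}_r$ into independent mean-zero bounded summands, a variance bound via Parseval and Lemma~\ref{lem:sum_sn}, and then Bernstein plus a union bound over the $(N+1)^s$ anchor boxes. The one substantive difference is your third step. You observe that for $\vecb\in\bar\Gamma^s$ and $\veck\in(N\setZ)^s\setminus\{\boldsymbol{0}\}$ one has $c_\veck(\vecb)=0$ (since $Nb_j\in\setZ$ forces $e^{-2\pi i Nh_jb_j}=1$ whenever $h_j\neq0$), so the dual-lattice resonances disappear and the total variance reduces to
\[
\sum_{r=1}^M\mathrm{Var}(\mathcal{Y}_r)=\sum_{\veck\notin(N\setZ)^s}|c_\veck(\vecb)|^2\sum_{r=1}^M|S_N(r,\veck)|^2\le(s-1)\,\lambda(J(\vecb))(1-\lambda(J(\vecb)))\le\frac{s-1}{4}.
\]
The paper does \emph{not} exploit this vanishing; its variance lemma for the fixed-generator case instead bounds the $(N\setZ)^s$-contribution crudely via $|c_{Nh}(b)|\le 1/(\pi N|h|)$, picking up an extra $Ms/(3N^2)$ and arriving at $\sum_r\mathrm{Var}(\mathcal{Y}_r)<s(1+1/(3N))$. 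Your route is both simpler and sharper, and would in fact produce a numerical constant well below $1.83$. One small caveat: the summands here are independent but not identically distributed (each carries a different fixed generator $r$); Bernstein only needs independence, so this is harmless, but be careful not to echo the paper's slip in calling them i.i.d.
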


    \begin{theorem}[Random $z$, Discrete Shift]\label{thm:main_result3}
Let $N$ be a prime, $M=N-1$, and the total number of points be $N_{tot} = M \cdot N$ (counting multiplicity). Let $z_1, \dots, z_M$ be i.i.d. uniform in $\{1, \dots, M\}$ and $\vecdelta_1, \dots, \vecdelta_M$ be i.i.d. uniform in $\setG^s$. Define $Q := \bigcup_{r=1}^M (P_N(z_r) \oplus \vecdelta_r)$. Then, for every $\delta \in (0,1)$, with probability at least $1-\delta$, the star discrepancy of $Q$ satisfies
		$$ D_{N_{tot}}^*(Q) \le 1.73 \times \frac{s\log(N+1) + \log 2 - \log(1-\delta) }{M}. $$
	\end{theorem}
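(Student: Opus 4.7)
The plan is to mirror the analysis of Theorem~\ref{thm:main_result1} but using the discrete Fourier/character-sum machinery of Section~3.2 throughout. First I would reduce the supremum defining $D^*_{N_{tot}}(Q)$ from $\vecb\in[0,1]^s$ to the finite set $\bar\Gamma^s$: since $Q\subset\Gamma^s$, replacing $b_j$ by $b_j^+:=\lceil Nb_j\rceil/N$ does not change the number of points of $Q$ lying in $J(\vecb)$ (an element $x_j\in\Gamma$ satisfies $x_j<b_j$ iff $x_j<b_j^+$), hence
\[
|\mathrm{disc}(Q,J(\vecb))|\le\sup_{\vecb\in\bar\Gamma^s}|\mathrm{disc}(Q,J(\vecb))|+\frac{s}{N},
\]
so it suffices to control the supremum over the $(N+1)^s$-point set $\bar\Gamma^s$, with the additive $s/N$ absorbed into the final constant.

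Next, for a fixed $\vecb\in\bar\Gamma^s$, Lemma~\ref{lem:disc_fourier_coeff} combined with the definition of $S_N(z,\veck)$ lets me write
\[
\mathrm{disc}(Q,J(\vecb))=\frac1M\sum_{r=1}^M X_r(\vecb),\qquad X_r(\vecb):=\sum_{\veck\in\mathcal D_N^s\setminus\{\bm 0\}}C_\veck(\vecb)\,e^{2\pi i\veck\cdot\vecdelta_r}\,S_N(z_r,\veck).
\]
Because the pairs $(z_r,\vecdelta_r)$ are i.i.d., the $X_r(\vecb)$ are i.i.d.\ real random variables (each one is simply a local discrepancy), mean-zero by Lemma~\ref{lem:disc_ex_delta} and bounded by $|X_r(\vecb)|\le1$. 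Taking the expectation first in $\vecdelta_r$ — where orthonormality of the discrete characters on $\Gamma^s$ kills all cross terms — and then in $z_r$, and using Lemmas~\ref{lem:ex_sn} and~\ref{lem:disc_square}, yields
\[
\mathbb E[|X_r(\vecb)|^2]\le\frac{s-1}{M}\,\lambda(J(\vecb))\bigl(1-\lambda(J(\vecb))\bigr)\le\frac{s-1}{4M}.
\]

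With mean zero, range $K=1$, and variance $\sigma^2\le(s-1)/(4M)$, Bernstein's inequality gives
\[
\mathbb P(|\mathrm{disc}(Q,J(\vecb))|>t)\le 2\exp\!\left(-\frac{Mt^2/2}{\sigma^2+Kt/3}\right),
\]
and a union bound over the $(N+1)^s$ test boxes in $\bar\Gamma^s$, solved for the smallest $t$ making the failure probability at most $1-\delta$, produces a bound of the form $t\le C\bigl(s\log(N+1)+\log 2-\log(1-\delta)\bigr)/M$. Folding in the $s/N$ slack from the reduction step and tracking constants carefully yields the stated $C\le1.73$.

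The main obstacle is not the Fourier or algebraic machinery — Lemmas~\ref{lem:disc_ex_delta}, \ref{lem:disc_square}, and~\ref{lem:ex_sn} supply mean zero, Parseval, and the character-sum variance bound in essentially one line each. The delicate step is verifying that at the relevant tail level $t^\ast\asymp s\log(N+1)/M$ one actually sits in the sub-exponential regime of Bernstein, i.e.\ $Kt^\ast/3\gtrsim\sigma^2$, so that the exponent scales like $Mt$ rather than $M^2t^2/s$; only the former produces the correct rate $O(s\log N/M)$, and only then can the constants be pushed low enough (together with the $s/N$ quantization error) to meet the explicit numerical bound $1.73$ uniformly over the allowed range of $N$.
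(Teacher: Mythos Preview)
Your proposal is correct and follows essentially the same route as the paper: reduce to the finite grid $\bar\Gamma^s$ (the paper cites \cite[Proposition~3.17]{dick2010} while you argue it directly from $Q\subset\Gamma^s$), expand the local discrepancy via the discrete Fourier series, bound the variance of each $X_r(\vecb)$ using Lemmas~\ref{lem:ex_sn} and~\ref{lem:disc_square}, and then apply Bernstein's inequality with a union bound over $(N+1)^s$ boxes. The only minor difference is that you sharpen the variance bound to $(s-1)/(4M)$ via $\lambda(1-\lambda)\le\tfrac14$, whereas the paper uses the cruder $\lambda(1-\lambda)\le1$ to get $s/M$; this does not change the argument or the final constant, since at the relevant tail level the $2t/3$ term in Bernstein's denominator dominates anyway.
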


\begin{theorem}[Fixed $z$, Discrete Shift] \label{thm:main_result4}
Let $N$ be a prime, $M=N-1$, and the total number of points be $N_{tot} = M \cdot N$ (counting multiplicity). Let $\vecDelta_1, \dots, \vecDelta_M$ be i.i.d. uniform in $\Gamma^s$. Define the multiset union $Q_f := \bigcup_{r=1}^{M} (P_N(r) \oplus \vecDelta_r)$. Then, for every $\delta \in (0,1)$, with probability at least $\delta$, the star discrepancy of $Q_f$ satisfies
\begin{equation} \label{eq:discrepancy_boundq}
    D_{N_{\text{tot}}}^*(Q_f) \le 1.73 \times \frac{s\log(N+1) + \log 2 - \log(1-\delta) }{M}. \nonumber
\end{equation}
\end{theorem}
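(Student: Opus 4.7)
The plan is to mirror the proof of Theorem \ref{thm:main_result3} for the discrete-shift case but with deterministic generators: Lemma \ref{lem:ex_sn} is replaced by the aggregate bound of Lemma \ref{lem:sum_sn}, and the randomness in $\vecdelta_1, \ldots, \vecdelta_M$ alone drives the concentration. First, I would reduce the star discrepancy from the continuous unit cube to a finite supremum over the grid $\bar\Gamma^s$. Since $Q_f \subset \Gamma^s$, for any $\vecb \in [0,1]^s$ the componentwise ceiling $\vecb^+ := (\lceil Nb_j \rceil / N)_j \in \bar\Gamma^s$ satisfies $Q_f \cap J(\vecb) = Q_f \cap J(\vecb^+)$ while $\lambda(J(\vecb^+)) - \lambda(J(\vecb)) \le s/N$, so
\begin{equation*}
D^*_{N_{\mathrm{tot}}}(Q_f) \le \sup_{\vecb \in \bar\Gamma^s} |\mathrm{disc}(Q_f, J(\vecb))| + s/N,
\end{equation*}
and the additive $s/N$ is absorbed into the target rate since $s/N \le s\log(N+1)/M$ for $N \ge 3$.

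For each fixed $\vecb \in \bar\Gamma^s$, I would expand the local discrepancy using Lemma \ref{lem:disc_fourier_coeff}, interchange summations, and collapse the inner sum over lattice points via the character sum $S_N(r, \veck)$ to obtain
\begin{equation*}
W_\vecb := \mathrm{disc}(Q_f, J(\vecb)) = \frac{1}{M}\sum_{r=1}^M Y_r(\vecb), \qquad Y_r(\vecb) := \sum_{\veck \in \setD_N^s \setminus \{\boldsymbol 0\}} C_\veck(\vecb) S_N(r, \veck) e^{2\pi i \veck \cdot \vecdelta_r},
\end{equation*}
where each $Y_r(\vecb) = \mathrm{disc}(P_N(r) \oplus \vecdelta_r, J(\vecb))$ is the local discrepancy of a single shifted lattice. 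The $Y_r$ are therefore independent (each depending only on its own shift), real-valued, bounded by $|Y_r| \le 1$, and mean-zero by Lemma \ref{lem:disc_ex_delta}. Using the character orthogonality $\EE[e^{2\pi i(\veck - \veck') \cdot \vecdelta_r}] = \indicator{\veck = \veck'}$ for $\veck, \veck' \in \setD_N^s$ and combining Lemma \ref{lem:disc_square} with Lemma \ref{lem:sum_sn}, the aggregate variance satisfies
\begin{equation*}
\sum_{r=1}^M \mathrm{Var}(Y_r) = \sum_{\veck \ne \boldsymbol 0} |C_\veck(\vecb)|^2 \sum_{r=1}^M |S_N(r, \veck)|^2 \le (s-1)\lambda(J(\vecb))(1-\lambda(J(\vecb))) \le s-1.
\end{equation*}

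The last step is Bernstein's inequality applied to the centered, independent, bounded sum $MW_\vecb = \sum_r Y_r$, giving
\begin{equation*}
\mathbb{P}(|W_\vecb| \ge t) \le 2\exp\left(-\frac{M^2 t^2/2}{(s-1) + Mt/3}\right).
\end{equation*}
A union bound over the $(N+1)^s$ grid boxes, setting the total failure probability to $1-\delta$, then yields (after solving the resulting quadratic in $t$) a bound
\begin{equation*}
\sup_{\vecb \in \bar\Gamma^s} |W_\vecb| \le \frac{1}{M}\left(\frac{L}{3} + \sqrt{\frac{L^2}{9} + 2(s-1)L}\right), \qquad L := s\log(N+1) + \log 2 - \log(1-\delta),
\end{equation*}
and using $L \ge s\log(N+1) \ge s\log 3$ to upper-bound $(s-1)/L \le 1/\log 3$ reduces the right-hand side to at most $1.73 \cdot L/M$. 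The chief obstacle is this concentration step: a naive Hoeffding bound using only $|Y_r| \le 1$ would give only the weaker square-root rate $\sqrt{L/M}$, so one must exploit the sharper aggregate variance bound $\sum_r \mathrm{Var}(Y_r) = O(s)$ furnished by Lemma \ref{lem:sum_sn} (far smaller than the trivial $O(M)$) in order to reach the sub-exponential regime of Bernstein's inequality and attain the linear-in-$L$ rate $L/M$ claimed by the theorem.
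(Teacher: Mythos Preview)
Your proposal is correct and follows essentially the same route as the paper: discretize the supremum to $\bar\Gamma^s$, expand each shifted-lattice discrepancy via the discrete Fourier series, use discrete Parseval together with Lemma~\ref{lem:sum_sn} and Lemma~\ref{lem:disc_square} to bound the aggregate variance by $s-1$, then apply Bernstein's inequality and a union bound over $(N+1)^s$ boxes to reach the constant $\tfrac{1}{3}(1+\sqrt{1+18/\log 3})\approx 1.723$. One small point where your write-up is actually cleaner than the paper's: you correctly note that the $Y_r$ are independent but not identically distributed (since each uses a different fixed generator $r$), whereas the paper calls them i.i.d.; Bernstein only needs independence, so your formulation is the right one.
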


	\begin{remark}
		In all four cases, the bound scales roughly as $O(s \log N_{tot} / \sqrt{N_{tot}})$. This confirms that the inverse of the star discrepancy depends quadratically on $s$, matching the polynomial dependence observed in the polynomial lattice setting \cite{dick2025inverse}. Moreover, the choice of $M$ in Theorems \ref{thm:main_result1}-\ref{thm:main_result4} is made to utilize the full set of generators in the field $\mathbb{Z}_N$ (for the fixed case). However, our proofs remain valid for any $M$ proportional to $N$ (e.g., $M = N/2$). The crucial requirement is $M \asymp N$ (i.e., $M$ is proportional to $N$) to ensure the optimal decay rate of the resulting discrepancy bound.
	\end{remark}

    \section{Proofs}
\subsection{Proof of Theorem \ref{thm:main_result1}}
For the point set $P=\bigcup_{r=1}^{M}\left(P_N(z_r) \oplus \vecDelta_r\right)$, where $z_1, \dots, z_M \in \{1, \dots, M\}$ and $\vecDelta_1, \dots, \vecDelta_M \in [0, 1)^s$ are chosen i.i.d. uniformly distributed, respectively. Let $J(\vecb) = [\boldsymbol{0}, \vecb)$ be an axis-parallel box. We define the random variable $Y_r(\vecb)$ as the local discrepancy of the $r$-th shifted lattice:
\begin{equation*}
    Y_r(\vecb) := \mathrm{disc}(P_N(z_r) \oplus \vecDelta_r, J(\vecb)). 
\end{equation*}
To bound the star discrepancy $D_{N_{tot}}^*(P) = \sup\limits_{\vecb \in [0,1]^s} |\mathrm{disc}(P, J(\vecb))|$, by \cite[Proposition 3.17]{dick2010}, we know that 
\begin{equation}\label{eq:discre}
    D_{N_{tot}}^*(P) \le 
    \max_{\vecb \in \bar\Gamma^s} |\mathrm{disc}(P, J(\vecb))| + \frac{s}{N} =  \max_{\vecb \in \bar\Gamma^s}\left|\frac{1}{M} \sum_{r=1}^{M} Y_r(\vecb)\right|+\frac{s}{N}.
\end{equation}
In the following we employ Bennett's inequality to demonstrate that one can find suitable $\{\left(z_1, \vecDelta_1\right),\\ \ldots,\left(z_M, \vecDelta_M\right)\}$ with high probability such that the star discrepancy of $P$ is small. To be able to apply Bennett's inequality, we need a bound on the variance of $Y_r(\vecb)$.

\begin{lemma} \label{lem:cont_var}
Let $z$ be chosen uniformly at random from $\{1, \dots, M\}$ and let $\vecDelta$ be chosen uniformly at random from $[0, 1)^s$. For any axis-parallel box $J(\vecb)$ with $\vecb \in \bar\Gamma^s$,
\begin{equation} \label{eq:variance_bound}
    \mathbb{E}_{z, \vecDelta} [\mathrm{disc}^2(P_N(z) \oplus \vecDelta, J(\vecb))] \le \frac{s}{M} + \frac{s}{3N^2}. \nonumber
\end{equation}
\end{lemma}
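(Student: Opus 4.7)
The plan is to expand the local discrepancy via the continuous Fourier series of Lemma \ref{lem:cont_fourier_coeffs} and exploit orthogonality in the shift $\vecDelta$ to decouple the variance into a sum of squared Fourier coefficients weighted by $|S_N(z,\veck)|^2$. Substituting the expansion into \eqref{eq:local}, the $\veck=\boldsymbol{0}$ coefficient $c_{\boldsymbol{0}}(\vecb) = \lambda(J(\vecb))$ cancels the subtracted Lebesgue volume, leaving
$$\mathrm{disc}(P_N(z)\oplus\vecDelta,\, J(\vecb)) = \sum_{\veck \in \mathbb{Z}^s \setminus \{\boldsymbol{0}\}} c_{\veck}(\vecb)\, S_N(z,\veck)\, e^{2\pi i \veck \cdot \vecDelta}.$$
Squaring, conjugating, and integrating $\vecDelta$ uniformly over $[0,1)^s$ eliminates every $\veck \neq \veck'$ cross term by orthogonality of $\{e^{2\pi i \veck \cdot \vecDelta}\}_{\veck \in \mathbb{Z}^s}$, giving $\mathbb{E}_{\vecDelta}[\mathrm{disc}^2] = \sum_{\veck \neq \boldsymbol{0}} |c_\veck(\vecb)|^2 |S_N(z,\veck)|^2$.

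Next I would take the expectation in $z$ and split the index set into $K_1$ and $(N\mathbb{Z})^s \setminus \{\boldsymbol{0}\}$. On $K_1$, Lemma \ref{lem:ex_sn} provides $\mathbb{E}_z[|S_N(z,\veck)|^2] \le (s-1)/M$, while on $(N\mathbb{Z})^s \setminus \{\boldsymbol{0}\}$ the character sum is identically $1$ since $\veck \cdot a(z) \equiv 0 \pmod N$ for all $z$. Combining the first bound with the Parseval identity of Lemma \ref{lem:con_square}, which gives $\sum_{\veck \neq \boldsymbol{0}} |c_\veck(\vecb)|^2 = \lambda(J(\vecb))(1 - \lambda(J(\vecb))) \le 1$, the $K_1$ contribution is at most $(s-1)/M \le s/M$.

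The technical core is bounding the tail $T := \sum_{\veck \in (N\mathbb{Z})^s \setminus \{\boldsymbol{0}\}} |c_\veck(\vecb)|^2$ by $s/(3N^2)$. Writing $\veck = N\boldsymbol{m}$ and $F_j := \sum_{m \neq 0} |c_{Nm}(b_j)|^2$, the explicit formula in Lemma \ref{lem:cont_fourier_coeffs} gives $|c_{Nm}(b)|^2 \le 1/(\pi N m)^2$, and $\sum_{m \ge 1} 1/m^2 = \pi^2/6$ yields $F_j \le 1/(3N^2)$. A naive factorisation $T = \prod_j(b_j^2 + F_j) - \prod_j b_j^2$ would only give the unacceptable $(1+1/(3N^2))^s - 1$. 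The key observation is that the one-dimensional Parseval identity applied to $1_{[0,b_j)}$ gives $b_j^2 + \sum_{k \neq 0}|c_k(b_j)|^2 = b_j$, so $b_j^2 + F_j \le b_j \le 1$. Inserting this into the telescoping expansion
$$\prod_{j=1}^s (b_j^2 + F_j) - \prod_{j=1}^s b_j^2 = \sum_{i=1}^s \left(\prod_{j<i}(b_j^2+F_j)\right) F_i \left(\prod_{j>i} b_j^2\right)$$
bounds each auxiliary factor by $1$, collapsing $T$ to $\sum_j F_j \le s/(3N^2)$. Adding the $K_1$ and tail contributions proves the claim. The main obstacle is precisely this collapse from an exponential-in-$s$ naive bound to the linear-in-$s$ tail, which hinges on invoking the tight 1D Parseval identity rather than only the crude bound $F_j \le 1/(3N^2)$.
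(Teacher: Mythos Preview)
Your argument is correct, and the overall architecture---Fourier expansion, Parseval in $\vecDelta$, splitting the frequencies into $K_1$ and $(N\mathbb{Z})^s\setminus\{\boldsymbol{0}\}$, then bounding $F_j\le 1/(3N^2)$---coincides with the paper's. The genuine divergence is in how you control the tail $T=\prod_j(b_j^2+F_j)-\prod_j b_j^2$. The paper first strips off the coordinates with $b_j=1$, restricting to a subset $\bm{u}\subseteq\{1,\dots,s\}$ where $b_j\le 1-1/N$ (recall $\vecb\in\bar\Gamma^s$), and then applies the Mean Value Theorem to $f(y)=\prod_{j\in\bm{u}}(b_j^2+y)$: since $b_j^2+\xi\le(1-1/N)^2+1/(3N^2)<1$ on that subset, one gets $f'(\xi)<|\bm{u}|\le s$ and hence $T<s/(3N^2)$. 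Your route avoids both the coordinate split and the MVT by invoking the one-dimensional Parseval identity $b_j^2+\sum_{k\neq0}|c_k(b_j)|^2=b_j$, which gives the sharper and uniform bound $b_j^2+F_j\le b_j\le 1$ (with equality precisely when $b_j=1$, since then all $c_k(1)=0$), so the telescoping sum collapses directly. Your version is slightly cleaner in that it needs no case distinction on the components of $\vecb$ and does not use the grid structure of $\bar\Gamma^s$ for this step; the paper's version instead exploits that grid structure to force the strict inequality needed for the MVT bound. One minor point: your displayed identity for $\mathrm{disc}$ as a Fourier series in $\vecDelta$ should be read in $L^2$ rather than pointwise, but since you only use it through Parseval this is harmless.
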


\begin{proof}
We first fix the generator $z$ and analyze the discrepancy as a function of the random shift $\vecDelta$, denoting it by $F_{z, \bm{b}}(\bm{\Delta})  := \mathrm{disc}(P_N(z) \oplus \vecDelta, J(\vecb))$.
By the definition of the local discrepancy, it is clear that if $\bm{b} = \bm{1}$, then 
$$
F_{z, \bm{b}}(\bm{\Delta}) = \frac{1}{N} \sum_{n=1}^N \mathbf{1}_{J(\bm{1})}(\{ \mathbf{x}_{n,\bm{u}} + \bm{\Delta} \}) - \text{Vol}(J(\bm{1}))  = 0.
$$
Now we consider $\emptyset \neq \bm{u} \subseteq \{1,2,\ldots,s\}$ and $\bm{b} = (\bm{b}_{\bm{u}}, \mathbf{1}_{\bar{\bm{u}}})$, which means that if $j\in \bm{u}$, $b_j \neq 1$  otherwise $b_j = 1$. Then, we have
\begin{align*}
    F_{z, (\bm{b}_{\bm{u}}, \mathbf{1}_{\bar{\bm{u}}})}(\bm{\Delta}) &= \frac{1}{N} \sum_{n=1}^N \mathbf{1}_{J(\bm{b}_{\bm{u}}, \mathbf{1}_{\bar{\bm{u}}})}(\{ \mathbf{x}_{n,\bm{u}} + \bm{\Delta} \}) - \text{Vol}(J(\bm{b}_{\bm{u}}, \mathbf{1}_{\bar{\bm{u}}})) \\
    &= \frac{1}{N} \sum_{n=1}^N \mathbf{1}_{J(\bm{b}_{\bm{u}})}(\{ \mathbf{x}_{n,\bm{u}} + \bm{\Delta}_{\bm{u}} \}) - \text{Vol}(J(\bm{b}_{\bm{u}})) \\
    &= \frac{1}{N} \sum_{n=1}^N \left( \sum_{\bm{k}_{\bm{u}} \neq \mathbf{0}} c_{\bm{k}_{\bm{u}}}(\bm{b}_{\bm{u}}) e^{2\pi i \, \bm{k}_{\bm{u}} \cdot (\mathbf{x}_{n,\bm{u}} + \bm{\Delta}_{\bm{u}})} \right)\\
    &= \sum_{\bm{k}_{\bm{u}} \neq \boldsymbol{0}} \left[c_{\bm{k}_{\bm{u}}}(\bm{b}_{\bm{u}}) S_N(z, \bm{k}_{\bm{u}})\right]e^{2\pi i \bm{k}_{\bm{u}} \cdot \boldsymbol{\Delta}_{\bm{u}}},
\end{align*}
where 
$
S_N(z, \bm{k}_{\bm{u}}) := \frac{1}{N} \sum_{n=0}^{N-1} e^{2\pi i \bm{k}_{\bm{u}} \cdot \boldsymbol{x}_{n,\bm{u}}}$.
Since $F_{z, (\bm{b}_{\bm{u}}, \mathbf{1}_{\bar{u}})}(\bm{\Delta})$ is a finite sum of bounded indicator functions, it is bounded and thus belongs to the Hilbert space $L^2([0,1]^s)$. Consequently, by continuous Parseval's identity, it follows that
\begin{equation*}
    \mathbb{E}_{\vecDelta} [|F_{z, (\bm{b}_{\bm{u}}, \mathbf{1}_{\bar{\bm{u}}})}(\bm{\Delta})|^2] = \int_{[0,1]^s} |F_{z, (\bm{b}_{\bm{u}}, \mathbf{1}_{\bar{u}})}(\bm{\Delta})|^2 \, \rd \vecDelta = \sum_{\bm{k}_{\bm{u}} \in \setZ^{|\bm{u}|} \setminus \{\boldsymbol{0}_{\bm{u}}\}} |c_{\veck_{\bm{u}}}(\vecb_{\bm{u}}) S_N(z, \veck_{\bm{u}})|^2.
\end{equation*}
Taking the expectation over $z$, and noting that $\mathbb{E}_g$ is a finite sum which commutes with the convergent infinite series over $\bm{k}_{\bm{u}}$, we obtain:
\begin{equation*}
    \mathbb{E}_{z, \vecDelta}[|F_{z, (\bm{b}_{\bm{u}}, \mathbf{1}_{\bar{u}})}(\bm{\Delta})|^2] = \sum_{\veck_{\bm{u}} \in \setZ^{|\bm{u}|} \setminus \{\boldsymbol{0}_{\bm{u}}\}} |c_{\veck_{\bm{u}}}(\vecb_{\bm{u}})|^2 \, \mathbb{E}_z [|S_N(z, \veck_{\bm{u}})|^2].
\end{equation*}
Let $L_{1} = \{\veck_{\bm{u}} \in \setZ^{|\bm{u}|} \setminus \{\boldsymbol{0}_{\bm{u}}\} : \veck_{\bm{u}} \not\equiv \boldsymbol{0} \pmod N\}$. For such $\veck_{\bm{u}}$, by Lemma \ref{lem:ex_sn}, we have
\begin{equation*}
\begin{aligned}
    \sum_{\veck_{\bm{u}} \in L_1} |c_{\veck_{\bm{u}}}(\vecb_{\bm{u}})|^2 \, \mathbb{E}_z [|S_N(z, \veck_{\bm{u}})|^2] &\le \frac{s-1}{M} \sum_{\veck_{\bm{u}} \in L_{1}} |c_{\veck_{\bm{u}}}(\vecb_{\bm{u}})|^2 \\
    &\le \frac{s}{M} \lambda(J(\vecb_{\bm{u}}))(1 - \lambda(J(\vecb_{\bm{u}}))) \le \frac{s}{M}. 
\end{aligned}
   \end{equation*}
where the last inequality follows by Lemma \ref{lem:con_square}.

Let $L_{2} = (N\setZ)^{|\bm{u}|} \setminus \{\boldsymbol{0}_{\bm{u}}\}$. We know that $S_N(z, \veck) = \indicator{\veck \cdot (1,z,z^2,\ldots,z^{s-1}) \equiv 0 \pmod N}$. For these frequencies, $k_j \equiv 0 \pmod N$ for all $j$, so the congruence $\veck_{\bm{u}} \cdot \boldsymbol{a}_{\bm{u}}(z) \equiv 0 \pmod N$ holds for all $z$. Thus $\mathbb{E}_z [|S_N(z, \veck_{\bm{u}})|^2] = 1$. The remainder term is
\begin{equation*}
\begin{aligned}
    \mathcal{R}_N(\vecb) &:= \sum_{\veck \in L_{2}} |c_{\veck}(\vecb)|^2 = \sum_{\boldsymbol{h} \in \setZ^{|\bm{u}|} \setminus \{\boldsymbol{0}_{\bm{u}}\}} |c_{N\boldsymbol{h}}(\vecb)|^2 \\
    &= \sum_{\boldsymbol{h} \in \setZ^{|\bm{u}|} \setminus \{\boldsymbol{0}_{\bm{u}}\}} \prod_{j\in \bm{u}} |c_{N h_j}(b_j)|^2 = \sum_{\boldsymbol{h} \in \setZ^{|\bm{u}|}} \prod_{j\in \bm{u}} |c_{N h_j}(b_j)|^2-|c_{\boldsymbol{0}_{\bm{u}}}(\vecb_{\bm{u}})|^2 \\
    &= \prod_{j\in \bm{u}} \left( \sum_{h_j \in \setZ} |c_{N h_j}(b_j)|^2 \right) - \prod_{j\in \bm{u}} |c_0(b_j)|^2.
\end{aligned}
\end{equation*}
where we have use the fact that the full set $\setZ^{|\bm{u}|}$ is a Cartesian product and the series converges absolutely. By Lemma \ref{lem:cont_fourier_coeffs}, for any non-zero integer $k$,
$$
|c_k(b)| = \left| \frac{1 - e^{-2\pi i k b}}{2\pi i k} \right| \le \frac{2}{2\pi |k|} = \frac{1}{\pi |k|}.
$$
Thus, for $h_j \neq 0$,
\begin{equation*}
    \sum_{h_j \in \setZ/\{0\}}|c_{N h_j}(b_j)|^2 \le \sum_{h_j \in \setZ/\{0\}}\frac{1}{\pi^2 N^2 h_j^2} = \frac{2}{\pi^2 N^2} \sum_{h=1}^{\infty} \frac{1}{h^2} = \frac{2}{\pi^2 N^2} \frac{\pi^2}{6} = \frac{1}{3N^2}.
\end{equation*}
For $h_j = 0$, we have $|c_0(b_j)|^2 = b_j^2 \le 1$. Thus,
\begin{align*}
    \mathcal{R}_N(\vecb) &= \left( \prod_{j\in \bm{u}} \sum_{h_j \in \setZ} |c_{N h_j}(b_j)|^2 \right) - \prod_{j\in \bm{u}} |c_0(b_j)|^2 \\
    &= \prod_{j\in \bm{u}} \left( |c_0(b_j)|^2 + \sum_{h_j \neq 0} |c_{N h_j}(b_j)|^2 \right) - \prod_{j\in \bm{u}} b_j^2 \\
    &\le \prod_{j\in \bm{u}} \left( b_j^2 + \frac{1}{3N^2} \right) - \prod_{j\in \bm{u}} b_j^2 .
\end{align*}
Define $f(y) := \prod_{j \in \bm{u}} (b_j^2 + y)$, then, it follows that
$$
\prod_{j\in \bm{u}} \left( b_j^2 + \frac{1}{3N^2} \right) - \prod_{j\in \bm{u}} b_j^2  =  f\left(\frac{1}{3N^2}\right) - f(0) =  \frac{1}{3N^2}f'(\xi).
$$
where $0 \le \xi \le 1/(3N^2)$ by the Mean Value Theorem. Since for $\bm{b}\in \Gamma$ and $j\in \bm{u}$, $b_j \neq 1$, we have $b_j^2+\xi \le (1-1/N)^2+1/(3N^2) = 1-2/N+4/(3N^2) < 1$. Thus,
\begin{equation*}
    f'(z) = \sum_{i \in \bm{u}} \prod_{\substack{j \in \bm{u} \\ j \neq i}} (b_j^2 + \xi) < |\bm{u}| \le s.
\end{equation*}
And, 
$$
\mathcal{R}_N(\vecb) \le \prod_{j\in \bm{u}} \left( b_j^2 + \frac{1}{3N^2} \right) - \prod_{j\in \bm{u}} b_j^2 =  \frac{1}{3N^2}f'(\xi) < \frac{s}{3N^2}.
$$
Combining the estimates for the main term and the remainder term concludes the proof.
\end{proof}


\begin{proof}[Proof of Theorem \ref{thm:main_result1}]
From Lemma \ref{lem:cont_ex_delta}, we have $\mathbb{E}_{\vecDelta_r}[Y_r(\vecb)] = 0$. By Lemma \ref{lem:cont_var}, 
\begin{equation*}
     \text{Var}[Y_r(\vecb)] =  \mathbb{E}_{z_r, \vecDelta_r} [Y_r^2(\vecb)] \le \frac{s}{M} + \frac{s}{3N^2}.
\end{equation*}
For fixed $\vecb$, the random variables $Y_r(\vecb), r=1,2,\ldots,M$, are independent identically distributed. Hence, let $S_M(\vecb) = \sum_{r=1}^M Y_r(\vecb)$,
$$
\sigma^2 := \text{Var}[S_M(\vecb)] = \sum_{r=1}^M\text{Var}[Y_r(\vecb)] \le s \left(1 + \frac{M}{3N^2}  \right) < s (1 + 1/(3N)).
$$
In summary, we have the following properties 
\begin{equation}\label{eq:Yr}
    |Y_r(\vecb)| \le 1,\quad \mathbb{E}_{\vecDelta_r}[Y_r(\vecb)] = 0, \quad \text{Var}\left[\sum_{r=1}^M Y_r(\vecb)\right] < s (1+ 1/(3N) ).
\end{equation}
Thus, we apply Bernstein's inequality \cite{bennett1962} to $S_M(\vecb)$. For any $t > 0$,
\begin{equation*}
    \mathbb{P}[|S_M(\vecb)| > t] \le 2 \exp\left( - \frac{t^2}{2 \sigma^2 + 2t/3} \right) \le 2 \exp\left( - \frac{t^2}{2s(1+1/(3N)) + 2t/3} \right).
\end{equation*}
The grid size is $|\bar{\Gamma}^s| = (N+1)^s$. We apply the union bound over all $\vecb \in \bar{\Gamma}^s$:
\begin{equation*}
    \mathbb{P}\left[ \exists \vecb \in \bar{\Gamma}^s : |S_M(\vecb)| > t \right] \le 2(N+1)^s \exp\left( - \frac{t^2}{2s(1+1/(3N)) + 2t/3} \right).
\end{equation*}
We set the right-hand side equal to $1-\delta$. Taking logarithms, we need $t$ to satisfy:
\begin{equation*}
    \frac{t^2}{2s(1+1/(3N)) + 2t/3} \ge \log \left(2(N+1)^s\right) - \log(1-\delta) =: L.
\end{equation*}
Solving the quadratic equation $t^2 - 2Lt/3 - 2s(1+1/(3N))L = 0$ for positive $t$, we obtain
\begin{equation*}
    t_0 = \frac{L}{3}\left(1 + \sqrt{1 + \frac{18s(1 + 1/(3N))}{L}}\right).
\end{equation*}
Then we have 
$$
\mathbb{P} \left[ \forall \boldsymbol{b} \in \bar\Gamma^s : \frac{1}M |S_M(\vecb)| \le \frac{t_0}{M} \right] \ge \delta > 0.
$$
Thus with probability at least $\delta$, for all intervals $J(\boldsymbol{b}), \boldsymbol{b} \in \Gamma^s$, we have 
\begin{equation}\label{eq:MYr}
    \begin{aligned}
\frac{1}M |S_M(\vecb)| 
&\le \frac{\log(2(N+1)^s) - \log(1-\delta)}{3 M} \left( 1 + \sqrt{1 + \frac{18s(1 + 1/(3N))}{\log(2(N+1)^s) - \log(1-\delta)}} \right) \\
&\le \frac{s\log(N+1) + \log 2 - \log(1-\delta)}{M} \underbrace{\frac{1}{3} \left( 1 + \sqrt{1 + \frac{18(1 + 1/6)}{\log 3}} \right)}_{=1.8283\ldots}. 
\end{aligned}
\end{equation}
By \eqref{eq:discre} and \eqref{eq:MYr}, we obtain that for any $\delta \in (0, 1)$, with probability at least $\delta$ that the point set $P$ with $N_{\text{tot}} = N\times M $ points, satisfies
$$
D_{N_{\text{tot}}}^*(P) \le 1.83 \times \frac{s\log(N+1) + \log 2 - \log(1-\delta) }{M}.
$$
This completes the proof.
\end{proof}

\subsection{Proof of Theorem \ref{thm:main_result2}}
The proof follows a similar structure to that of Theorem 1.  Consider $P_f=\bigcup_{r=1}^{M}\left(P_N(r) \oplus \bm{\Delta}_r\right)$, where $\bm{\Delta}_1, \dots, \bm{\Delta}_M \in [0,1)^s$ are chosen i.i.d. uniformly distributed. Similarly, we define
\begin{equation*}
    \mathcal{Y}_r(\vecb) := \mathrm{disc}(P_N(r) \oplus \bm{\Delta}_r, J(\vecb)),
\end{equation*}
where $\vecb \in \bar \Gamma^s$.
From Lemma \ref{lem:cont_ex_delta}, we have $\mathbb{E}_{\bm{\Delta}_r}[\mathcal{Y}_r(\vecb)] = 0$. Let $\mathcal{S}_{M}(\vecb) = \sum\limits_{r=1}^{M}\mathcal{Y}_r(\vecb)$, we have $\mathbb{E}[\mathcal{S}_{M}(\vecb)] = 0$. Then we consider $\text{Var}[\mathcal{S}_{M}(\vecb)] = \sum\limits_{r=1}^{M}\text{Var}[\mathcal{Y}_r(\vecb)] = \sum\limits_{r=1}^{M}\mathbb{E}_{\bm{\Delta}_r}[\mathcal{Y}_r^2(\vecb)]$.

\begin{lemma} \label{lem:varYd}
Let $\{\vecDelta_r\}_{r=1}^M$ be chosen uniformly at random from $[0,1)^s$. For any axis-parallel box $J(\vecb)$ with $\vecb \in \bar\Gamma^s$,
\begin{equation} \label{eq:variance_bound}
\sum_{r=1}^{M}\mathbb{E}_{\bm{\Delta}_r}[\mathrm{disc}^2(P_N(r) \oplus \bm{\Delta}_r, J(\vecb))] < s(1+1/(3N)). \nonumber
\end{equation}
\end{lemma}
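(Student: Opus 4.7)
The plan is to adapt the Fourier-analytic argument from the proof of Lemma \ref{lem:cont_var} to the setting of fixed generators. The essential swap is to replace the expectation over a random $z$ (and the per-frequency bound from Lemma \ref{lem:ex_sn}) by an exact sum over $r=1,\ldots,M$ together with the aggregate bound from Lemma \ref{lem:sum_sn}. Since the continuous shifts $\vecDelta_r$ remain independent and uniform on $[0,1)^s$, the Fourier expansion of each $\mathcal{Y}_r(\vecb)$ and the subsequent application of Parseval's identity go through verbatim.

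Concretely, first I would fix $r$ and restrict to the active set of coordinates $\bm{u} = \{j : b_j \neq 1\}$, as was done for Lemma \ref{lem:cont_var}. The Fourier expansion of the local discrepancy then gives
\begin{equation*}
\mathcal{Y}_r(\vecb) = \sum_{\veck_{\bm{u}} \neq \boldsymbol{0}_{\bm{u}}} c_{\veck_{\bm{u}}}(\vecb_{\bm{u}}) \, S_N(r, \veck_{\bm{u}}) \, e^{2\pi i \veck_{\bm{u}} \cdot \vecDelta_{r,\bm{u}}},
\end{equation*}
and Parseval's identity for the Fourier series on $[0,1)^{|\bm{u}|}$ yields
\begin{equation*}
\mathbb{E}_{\vecDelta_r}[\mathcal{Y}_r^2(\vecb)] = \sum_{\veck_{\bm{u}} \in \setZ^{|\bm{u}|} \setminus \{\boldsymbol{0}_{\bm{u}}\}} |c_{\veck_{\bm{u}}}(\vecb_{\bm{u}})|^2 \, |S_N(r, \veck_{\bm{u}})|^2.
\end{equation*}
Summing over $r = 1, \ldots, M$ and exchanging the order of summation, the quantity of interest becomes $\sum_{\veck_{\bm{u}} \neq \boldsymbol{0}} |c_{\veck_{\bm{u}}}(\vecb_{\bm{u}})|^2 \, T(\veck_{\bm{u}})$ where $T(\veck_{\bm{u}}) := \sum_{r=1}^M |S_N(r, \veck_{\bm{u}})|^2$.

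Next I would split the outer sum along the same partition used in Lemma \ref{lem:cont_var}: the ``generic'' set $L_1$ of frequencies not identically zero modulo $N$, and the ``resonant'' set $L_2 = (N\setZ)^{|\bm{u}|} \setminus \{\boldsymbol{0}_{\bm{u}}\}$. On $L_1$, Lemma \ref{lem:sum_sn} gives $T(\veck_{\bm{u}}) \leq s-1$; combining with Lemma \ref{lem:con_square} bounds the $L_1$-contribution by $(s-1) \lambda(J(\vecb_{\bm{u}}))(1-\lambda(J(\vecb_{\bm{u}}))) \leq s-1$. On $L_2$, every $\veck_{\bm{u}}$ satisfies $\veck_{\bm{u}} \cdot a(r) \equiv 0 \pmod N$ for all $r$, so $|S_N(r,\veck_{\bm{u}})|^2 = 1$ and $T(\veck_{\bm{u}}) = M$. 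Consequently, the $L_2$-contribution equals $M \cdot \mathcal{R}_N(\vecb)$, and the estimate $\mathcal{R}_N(\vecb) < s/(3N^2)$ derived inside the proof of Lemma \ref{lem:cont_var} (via $|c_k(b)| \leq 1/(\pi|k|)$, the Basel sum, and a Mean Value Theorem step) applies unchanged.

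Combining the two contributions and using $M = N-1$,
\begin{equation*}
\sum_{r=1}^M \mathbb{E}_{\vecDelta_r}[\mathcal{Y}_r^2(\vecb)] < (s-1) + \frac{Ms}{3N^2} = (s-1) + \frac{(N-1)s}{3N^2} < s\left(1 + \frac{1}{3N}\right),
\end{equation*}
which is the claimed bound. There is no genuine obstacle here: all the hard work, namely the Fourier expansion restricted to the active coordinates, the Parseval identity, and the tail estimate $\mathcal{R}_N(\vecb) < s/(3N^2)$, was already executed for Lemma \ref{lem:cont_var}. The only subtlety is bookkeeping the switch from the per-frequency expectation bound $\EE_z |S_N(z,\veck)|^2 \le (s-1)/M$ (Lemma \ref{lem:ex_sn}) to the aggregate bound $\sum_{r=1}^M |S_N(r,\veck)|^2 \le s-1$ (Lemma \ref{lem:sum_sn}); these produce the same total contribution after summing/averaging over $M$, which is why the final bound matches the variance bound of the random-$z$ model scaled by $M$.
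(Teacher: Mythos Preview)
Your proposal is correct and follows essentially the same approach as the paper: restrict to the active coordinates, apply the Fourier expansion and Parseval's identity, sum over $r$, split into $L_1$ and $L_2$, and invoke Lemma~\ref{lem:sum_sn} and the remainder estimate $\mathcal{R}_N(\vecb) < s/(3N^2)$ from Lemma~\ref{lem:cont_var}. The paper's proof is essentially a terse version of what you wrote, with the same swap of Lemma~\ref{lem:ex_sn} for Lemma~\ref{lem:sum_sn}.
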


\begin{proof}
Following the proof of Lemma \ref{lem:cont_var}, we consider $\emptyset \neq \bm{u} \subseteq \{1,2,\ldots,s\}$ and $\bm{b} = (\bm{b}_{\bm{u}}, \mathbf{1}_{\bar{\bm{u}}})$. Then, we have
\begin{align*}
    F_{r, (\bm{b}_{\bm{u}}, \mathbf{1}_{\bar{\bm{u}}})}(\bm{\Delta}) &:= \mathrm{disc}(P_N(r) \oplus \vecDelta, J(\vecb)) = \sum_{\bm{k}_{\bm{u}} \neq \boldsymbol{0}_{\vecu}} \left[c_{\bm{k}_{\bm{u}}}(\bm{b}_{\bm{u}}) S_N(r, \bm{k}_{\bm{u}})\right]e^{2\pi i \bm{k}_{\bm{u}} \cdot \boldsymbol{\Delta}_{\bm{u}}}.
\end{align*}
By continuous Parseval's identity and Lemma \ref{lem:sum_sn}, it follows that
\begin{equation*}
\begin{aligned}
\sum_{r=1}^{M}\mathbb{E}_{\vecDelta_r} [|F_{r, (\bm{b}_{\bm{u}}, \mathbf{1}_{\bar{\bm{u}}})}(\vecDelta_r)|^2] &= \sum_{r=1}^{M}\left[\sum_{\bm{k}_{\bm{u}} \neq \boldsymbol{0}_{\vecu}} |c_{\bm{k}_{\bm{u}}}(\bm{b}_{\bm{u}}) S_N(r, \bm{k}_{\bm{u}})|^2\right]\\
&= \sum_{\bm{k}_{\bm{u}} \neq \boldsymbol{0}_{\vecu}} |c_{\veck}(\vecb)|^2 \sum_{r=1}^{M} |S_N(r, \veck)|^2\\
& = \sum_{\bm{k}_{\bm{u}} \in L_1} |c_{\veck}(\vecb)|^2 \sum_{r=1}^{M} |S_N(r, \veck)|^2 + \sum_{\bm{k}_{\bm{u}} \in L_2} |c_{\veck}(\vecb)|^2 \sum_{r=1}^{M} |S_N(r, \veck)|^2\\
& \le (s-1)\sum_{\bm{k}_{\bm{u}} \in L_1} |c_{\veck}(\vecb)|^2+M\sum_{\bm{k}_{\bm{u}} \in L_2} |c_{\veck}(\vecb)|^2\\
& \le (s-1)\lambda(J(\vecb_{\bm{u}}))(1 - \lambda(J(\vecb_{\bm{u}})))+\frac{Ms}{3N^2} < s(1+1/(3N)),
\end{aligned}
\end{equation*}
where $L_{1} = \{\veck_{\bm{u}} \in \setZ^{|\bm{u}|} \setminus \{\boldsymbol{0}_{\bm{u}}\} : \veck_{\bm{u}} \not\equiv \boldsymbol{0} \pmod N\}$ and $L_{2} = (N\setZ)^{|\bm{u}|} \setminus \{\boldsymbol{0}_{\bm{u}}\}$.
This completes the proof.
\end{proof}

In summary, we have the following properties. For fixed $\vecb \in \bar\Gamma^s$, $\{\mathcal{Y}_r(\vecb)\}_{r=1}^{M}$ are i.i.d. and satisfy 
$$
|\mathcal{Y}_r(\vecb)| \le 1, \quad \mathbb{E}_{\bm{\Delta}_r}[\mathcal{Y}_r(\vecb)] = 0, \quad \text{Var}[\mathcal{S}_{M}(\vecb)] = \sum_{r=1}^{M}\mathbb{E}_{\bm{\Delta}_r}[\mathcal{Y}_r^2(\vecb)] < s(1+1/(3N)). 
$$
Thus the random variables $\mathcal{Y}_r(\vecb)$ satisfy the similar properties as $Y_r(\vecb)$ given in \eqref{eq:Yr}. Therefore, the results from the proof of Theorem \ref{thm:main_result1} apply accordingly.

\subsection{Proof of Theorem \ref{thm:main_result3}}
We now turn to the case of discrete shifts with random generators.
Consider $Q=\bigcup_{r=1}^{M}\left(P_N(z_r) \oplus \bm{\delta}_r\right)$, where $z_1, \dots, z_M \in \{1,2,\ldots,M\}$ and $\bm{\delta}_1, \dots, \bm{\delta}_M \in \Gamma^s$ are chosen i.i.d. uniformly distributed, respectively. For $\vecb \in \bar \Gamma^s$, we define
\begin{equation*}
    X_r(\vecb) := \mathrm{disc}(P_N(z_r) \oplus \bm{\delta}_r, J(\vecb)),
\end{equation*}
From Lemma \ref{lem:disc_ex_delta}, we have $\mathbb{E}_{\bm{\delta}_r}[X_r(\vecb)] = 0$. Let $T_{M}(\vecb) = \sum\limits_{r=1}^{M}X_r(\vecb)$, it follows that $\mathbb{E}[T_{M}(\vecb)]  = 0$. Then we consider $\text{Var}[T_{M}(\vecb)] = \sum\limits_{r=1}^{M}\text{Var}[X_r(\vecb)] = \sum\limits_{r=1}^{M}\mathbb{E}_{\bm{\delta}_r}[X_r^2(\vecb)]$.

\begin{lemma} \label{lem:varX}
Let $\vecdelta \in \Gamma^s$ and $z \in \{1,2,\ldots,M\}$ be chosen uniformly at random. For any axis-parallel box $J(\vecb)$ with $\vecb \in \bar\Gamma^s$,
\begin{equation} \label{eq:variance_bound}
\mathbb{E}_{z,\bm{\delta}}[\mathrm{disc}^2(P_N(z) \oplus \bm{\delta}, J(\vecb))] < \frac{s}{M}. \nonumber
\end{equation}
\end{lemma}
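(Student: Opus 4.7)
The plan is to mirror the proof of Lemma \ref{lem:cont_var} but using the discrete Fourier framework of Section 3.2 in place of the continuous one. The key structural simplification is that both the points of $P_N(z)\oplus\vecdelta$ and the corners $\vecb$ live on the grid $\Gamma^s$, so the indicator $1_{J(\vecb)}$ admits an exact finite discrete Fourier expansion indexed by $\veck\in\mathcal{D}_N^s$ (Lemma \ref{lem:disc_fourier_coeff}). This means the troublesome ``aliasing remainder'' over $(N\setZ)^{|\bm{u}|}\setminus\{\boldsymbol{0}\}$ that appeared in the continuous proof simply does not arise here.

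First I would reduce to the nontrivial projections: handle $\vecb=\boldsymbol{1}$ trivially, then for each nonempty $\bm{u}\subseteq\{1,\ldots,s\}$ restrict attention to $\vecb=(\vecb_{\bm{u}},\boldsymbol{1}_{\bar{\bm{u}}})$ with $\vecb_{\bm{u}}\in\bar\Gamma^{|\bm{u}|}$, all coordinates strictly less than $1$. Writing $F_{z,\vecb}(\bm{\delta}):=\mathrm{disc}(P_N(z)\oplus\bm{\delta},J(\vecb))$ and expanding $1_{J(\vecb_{\bm{u}})}$ by Lemma \ref{lem:disc_fourier_coeff}, the usual manipulation yields
\begin{equation*}
F_{z,(\vecb_{\bm{u}},\boldsymbol{1}_{\bar{\bm{u}}})}(\bm{\delta})=\sum_{\veck_{\bm{u}}\in \mathcal{D}_N^{|\bm{u}|}\setminus\{\boldsymbol{0}_{\bm{u}}\}}C_{\veck_{\bm{u}}}(\vecb_{\bm{u}})\,S_N(z,\veck_{\bm{u}})\,e^{2\pi i\,\veck_{\bm{u}}\cdot\bm{\delta}_{\bm{u}}},
\end{equation*}
where the character sum $S_N(z,\veck_{\bm{u}})$ is exactly the indicator of $\veck_{\bm{u}}\cdot a(z)\equiv 0\pmod N$.

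Next I would take the expectation over the uniform discrete shift $\bm{\delta}\in\Gamma^s$ using the discrete Parseval identity \eqref{eq:discrete_parseval}; since the characters $e^{2\pi i\veck_{\bm{u}}\cdot\bm{\delta}_{\bm{u}}}$ are an orthonormal basis of $L^2(\Gamma^{|\bm{u}|})$, this collapses the expectation to $\sum_{\veck_{\bm{u}}\neq\boldsymbol{0}_{\bm{u}}}|C_{\veck_{\bm{u}}}(\vecb_{\bm{u}})|^2|S_N(z,\veck_{\bm{u}})|^2$. Averaging over $z$ and noting that every $\veck_{\bm{u}}\in\mathcal{D}_N^{|\bm{u}|}\setminus\{\boldsymbol{0}_{\bm{u}}\}$ automatically lies in $K_1$ (its coordinates are in $\{0,\ldots,N-1\}$ and not all zero, so $\veck_{\bm{u}}\not\equiv\boldsymbol{0}\pmod N$), Lemma \ref{lem:ex_sn} gives $\mathbb{E}_z[|S_N(z,\veck_{\bm{u}})|^2]\le (s-1)/M$ uniformly. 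Factoring this out and applying Lemma \ref{lem:disc_square} to $\vecb_{\bm{u}}$ bounds the remaining Fourier energy by $\lambda(J(\vecb_{\bm{u}}))(1-\lambda(J(\vecb_{\bm{u}})))\le 1$, yielding $\mathbb{E}_{z,\bm{\delta}}[F_{z,\vecb}^2]\le (s-1)/M<s/M$.

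There is no serious obstacle here: precisely because the frequencies in the discrete expansion are confined to $\mathcal{D}_N^s$, the case split into $L_1$ and $L_2$ from the continuous proof collapses into $L_1$ alone, so the $s/(3N^2)$ correction term disappears and the bound $s/M$ falls out cleanly. The only point that requires a moment of care is justifying that the discrete Fourier expansion of $F_{z,\vecb}(\bm{\delta})$ as a function of $\bm{\delta}\in\Gamma^s$ is valid and that Parseval applies, which is immediate from the finite-dimensional orthonormality of $\{e_{\veck}\}_{\veck\in\mathcal{D}_N^s}$.
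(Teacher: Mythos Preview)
Your proposal is correct and follows essentially the same argument as the paper: expand the local discrepancy via the discrete Fourier series of $1_{J(\vecb)}$, apply discrete Parseval over the uniform shift $\bm{\delta}$, then invoke Lemma~\ref{lem:ex_sn} and Lemma~\ref{lem:disc_square} to obtain $(s-1)/M<s/M$. The only cosmetic difference is that the paper works directly in the full $s$-dimensional frequency set $\mathcal{D}_N^s\setminus\{\boldsymbol{0}\}$ without passing through the $\bm{u}$-projection step; as you yourself observe, the aliasing set $L_2$ is empty in the discrete setting, so this decomposition is unnecessary here (though harmless).
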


\begin{proof}
By the definition of local discrepancy \eqref{eq:local}, we have
\begin{align*}
    F_{z, \bm{b}}(\bm{\delta}) &:= \mathrm{disc}(P_N(z) \oplus \bm{\delta}, J(\vecb)) = \frac{1}{N} \sum_{n=1}^N \mathbf{1}_{J(\bm{b})}(\{ \mathbf{x}_{n} + \bm{\delta} \}) - \text{Vol}(J(\bm{b})) \\
    &= \frac{1}{N} \sum_{n=1}^N \left( \sum_{\bm{k}
    \in \setD_N^{s} \setminus \{\boldsymbol{0}\}} C_{\bm{k}_{\bm{u}}}(\bm{b}) e^{2\pi i \, \bm{k}\cdot (\mathbf{x}_{n} + \bm{\delta})} \right)\\
    &= \sum_{\bm{k}
    \in \setD_N^{s} \setminus \{\boldsymbol{0}\}}\left[C_{\bm{k}}(\bm{b}) S_N(z, \bm{k})\right]e^{2\pi i \bm{k}\cdot \boldsymbol{\delta}},
\end{align*}
where 
$
S_N(z, \bm{k}) := \frac{1}{N} \sum_{n=0}^{N-1} e^{2\pi i \bm{k}\cdot \boldsymbol{x}_{n}}$.
Since $\bm{\delta} \in \Gamma^s$, $F_{z, \bm{b}}(\bm{\delta})$ is a finite sum of bounded indicator functions in $L^2(\Gamma^s)$. Consequently, by discrete Parseval's identity and Lemma \ref{lem:ex_sn}, it follows that
\begin{equation*}
\begin{aligned}
    \mathbb{E}_{z,\vecdelta} [|F_{z, \bm{b}}(\vecdelta)|^2] &= \mathbb{E}_{z}\left[\sum_{\bm{k}
    \in \setD_N^{s} \setminus \{\boldsymbol{0}\}} |C_{\veck}(\vecb) S_N(z, \veck)|^2\right] = \sum_{\bm{k}
    \in \setD_N^{s} \setminus \{\boldsymbol{0}\}} |C_{\veck}(\vecb)|^2 \mathbb{E}_{z}|S_N(z, \veck)|^2\\
    & \le \frac{s-1}{M} \sum_{\bm{k}
    \in \setD_N^{s} \setminus \{\boldsymbol{0}\}} |C_{\veck}(\vecb)|^2 = \frac{s-1}{M}\lambda(J(\vecb))(1 - \lambda(J(\vecb))) < \frac{s}{M},
\end{aligned}
\end{equation*}
where the last inequality follows by Lemma \ref{lem:disc_square}.
This completes the proof.
\end{proof}

\begin{proof}[Proof of Theorem \ref{thm:main_result3}]
In summary, we have the following properties. For fixed $\vecb \in \bar\Gamma^s$, $\{X_r(\vecb)\}_{r=1}^{M}$ are i.i.d. and satisfy 
\begin{equation}\label{eq:Xr}
    |X_r(\vecb)| \le 1, \quad \mathbb{E}_{\bm{\delta}_r}[X_r(\vecb)] = 0, \quad \text{Var}[T_{M}(\vecb)] = \sum_{r=1}^{M}\mathbb{E}_{z_r,\bm{\delta}_r}[X_r^2(\vecb)] < s. 
\end{equation}
Thus the random variables $X_r(\vecb)$ satisfy similar properties as $Y_r(\vecb)$ given in \eqref{eq:Yr}. Thus the results from the proof of Theorem \ref{thm:main_result1} apply accordingly. In particular, the
bound \eqref{eq:MYr} applies also to $\frac{1}{M}\sum_{r=1}^{M}X_r(\vecb)$: with probability at least $\delta \in (0, 1)$ for all $\vecb$, it is true that
\begin{equation}\label{eq:XMr}
    \begin{aligned}
\left| \frac{1}{M} \sum_{r=1}^{M} X_r(\boldsymbol{b}) \right|
&\le \underbrace{\frac{1}{3} \left( 1 + \sqrt{1 + \frac{18}{\log 3}} \right)}_{=1.7231\ldots} \times \frac{s\log(N+1) + \log 2 - \log(1-\delta) }{M}.
\end{aligned}
\end{equation}
By \eqref{eq:discre} and \eqref{eq:XMr}, we obtain that for any $\delta \in (0, 1)$, with probability at least $\delta$ that the point set $Q$ with $N_{\text{tot}} = N\times M$ points, satisfies
$$
D_{N_{\text{tot}}}^*(Q) \le 1.73 \times \frac{s\log(N+1) + \log 2 - \log(1-\delta) }{M}.
$$
This completes the proof.
\end{proof}

\subsection{Proof of Theorem \ref{thm:main_result4}}
For the point set $Q_f=\bigcup_{r=1}^{M}\left(P_N(r) \oplus \bm{\delta}_r\right)$, where $\bm{\delta}_1, \dots, \bm{\delta}_M \in \Gamma^s$ are chosen i.i.d. uniformly distributed, similarly, we define
\begin{equation*}
    \mathcal{X}_r(\vecb) := \mathrm{disc}(P_N(r) \oplus \bm{\delta}_r, J(\vecb)),
\end{equation*}
where $\vecb \in \bar \Gamma^s$.
From Lemma \ref{lem:disc_ex_delta}, we have $\mathbb{E}_{\bm{\delta}_r}[\mathcal{X}_r(\vecb)] = 0$. Let $\mathcal{T}_{M}(\vecb) = \sum\limits_{r=1}^{M}\mathcal{X}_r(\vecb)$, we have $\mathbb{E}[\mathcal{T}_{M}(\vecb)] = 0$. Then we consider $\text{Var}[\mathcal{T}_{M}(\vecb)] = \sum\limits_{r=1}^{M}\text{Var}[\mathcal{X}_r(\vecb)] = \sum\limits_{r=1}^{M}\mathbb{E}_{\bm{\delta}_r}[\mathcal{X}_r^2(\vecb)]$.

\begin{lemma} \label{lem:varYd}
Let $\{\vecdelta_r\}_{r=1}^M$ be chosen uniformly at random from $\Gamma^s$. For any axis-parallel box $J(\vecb)$ with $\vecb \in \bar\Gamma^s$,
\begin{equation} \label{eq:variance_bound}
\sum_{r=1}^{M}\mathbb{E}_{\bm{\delta}_r}[\mathrm{disc}^2(P_N(r) \oplus \bm{\delta}_r, J(\vecb))] < s. \nonumber
\end{equation}
\end{lemma}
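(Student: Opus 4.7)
The plan is to mirror the proof of Lemma \ref{lem:varX} (random $z$, discrete shift) almost verbatim, but to replace the expectation bound on $|S_N(z,\veck)|^2$ coming from Lemma \ref{lem:ex_sn} with the deterministic summation bound from Lemma \ref{lem:sum_sn}, since the generators are now fixed as $r = 1,\dots,M$. Concretely, the variance control reduces to three ingredients already in the excerpt: the discrete Fourier expansion of $\mathbf{1}_{J(\vecb)}$ (Lemma \ref{lem:disc_fourier_coeff}), discrete Parseval \eqref{eq:discrete_parseval}, and the character-sum count (Lemma \ref{lem:sum_sn}).

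First, for each fixed $r \in \{1,\dots,M\}$ I would expand the local discrepancy via the discrete Fourier series on $\Gamma^s$, which is legitimate because $\vecb \in \bar\Gamma^s$ and $\bm{\delta}_r \in \Gamma^s$. The $\veck = \bm{0}$ term cancels $\lambda(J(\vecb))$ and the character sum along the Korobov lattice factors out, giving
$$\mathcal{X}_r(\vecb) \;=\; \sum_{\veck \in \mathcal{D}_N^s \setminus \{\bm{0}\}} C_{\veck}(\vecb)\, S_N(r,\veck)\, e^{2\pi i \veck \cdot \bm{\delta}_r}.$$
Then, since $\bm{\delta}_r$ is uniform on the grid $\Gamma^s$, the discrete Parseval identity \eqref{eq:discrete_parseval} yields
$$\mathbb{E}_{\bm{\delta}_r}\bigl[|\mathcal{X}_r(\vecb)|^2\bigr] \;=\; \sum_{\veck \in \mathcal{D}_N^s \setminus \{\bm{0}\}} |C_{\veck}(\vecb)|^2\, |S_N(r,\veck)|^2.$$

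Second, I would sum over $r = 1,\dots,M$, swap the two finite sums, and bring in Lemma \ref{lem:sum_sn}, which states that $\sum_{r=1}^{M} |S_N(r,\veck)|^2 \le s-1$ for every $\veck$ in the index set $K_1$ identified with $\mathcal{D}_N^s \setminus \{\bm{0}\}$. This produces
$$\sum_{r=1}^{M} \mathbb{E}_{\bm{\delta}_r}\bigl[|\mathcal{X}_r(\vecb)|^2\bigr] \;\le\; (s-1) \sum_{\veck \in \mathcal{D}_N^s \setminus \{\bm{0}\}} |C_{\veck}(\vecb)|^2.$$
Finally, Lemma \ref{lem:disc_square} evaluates the remaining Fourier-coefficient sum as $\lambda(J(\vecb))(1-\lambda(J(\vecb))) \le 1/4$, so the total is at most $(s-1)/4 < s$, concluding the proof.

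There is no serious obstacle; the argument is essentially bookkeeping that strings together four previously established lemmas. The only conceptual point worth flagging is that, unlike the continuous analog in the Theorem \ref{thm:main_result2} subsection, no remainder term $\mathcal{R}_N(\vecb)$ from frequencies $\veck \in (N\mathbb{Z})^s$ appears, because the discrete Fourier transform is intrinsically supported on the finite set $\mathcal{D}_N^s$. This is precisely the reason the variance bound here is the cleaner $<s$ rather than $< s(1 + 1/(3N))$, and it is what ultimately leads to the improved constant $1.73$ in Theorem \ref{thm:main_result4} compared with $1.83$ in Theorem \ref{thm:main_result2}.
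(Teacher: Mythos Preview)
Your proposal is correct and essentially identical to the paper's own proof: both expand $\mathcal{X}_r(\vecb)$ via the discrete Fourier series, apply the discrete Parseval identity, swap the sums over $r$ and $\veck$, invoke Lemma~\ref{lem:sum_sn} to bound $\sum_{r=1}^M |S_N(r,\veck)|^2 \le s-1$, and finish with Lemma~\ref{lem:disc_square}. The only cosmetic difference is that you insert the intermediate bound $\lambda(J(\vecb))(1-\lambda(J(\vecb))) \le 1/4$, whereas the paper goes directly from $(s-1)\lambda(J(\vecb))(1-\lambda(J(\vecb)))$ to $< s$.
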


\begin{proof}
Following the proof of Lemma \ref{lem:varX}, we have
\begin{align*}
    F_{r, \bm{b}}(\bm{\delta}_r) &:= \mathrm{disc}(P_N(r) \oplus \bm{\delta}_r, J(\vecb)) = \sum_{\bm{k}
    \in \setD_N^{s} \setminus \{\boldsymbol{0}\}}\left[C_{\bm{k}}(\bm{b}) S_N(r, \bm{k})\right]e^{2\pi i \bm{k}\cdot \boldsymbol{\delta}_{r}}.
\end{align*}
Consequently, by the discrete Parseval's identity and Lemma \ref{lem:sum_sn}, it follows that
\begin{equation*}
\begin{aligned}
\sum_{r=1}^{M}\mathbb{E}_{\vecdelta_r} [|F_{r, \bm{b}}(\vecdelta_r)|^2] &= \sum_{r=1}^{M}\sum_{\bm{k}
    \in \setD_N^{s} \setminus \{\boldsymbol{0}\}} |C_{\veck}(\vecb) S_N(r, \veck)|^2 = \sum_{\bm{k}
    \in \setD_N^{s} \setminus \{\boldsymbol{0}\}} |C_{\veck}(\vecb)|^2 \sum_{r=1}^{M}\left[ |S_N(r, \veck)|^2\right]\\
    & \le (s-1) \sum_{\bm{k}
    \in \setD_N^{s} \setminus \{\boldsymbol{0}\}} |C_{\veck}(\vecb)|^2 = (s-1)\lambda(J(\vecb))(1 - \lambda(J(\vecb))) < s,
\end{aligned}
\end{equation*}
where the last inequality follows by Lemma \ref{lem:disc_square}.
This completes the proof.
\end{proof}

In summary, we have the following properties. For fixed $\vecb \in \bar\Gamma^s$, $\{\mathcal{X}_r(\vecb)\}_{r=1}^{M}$ are i.i.d. and satisfy 
$$
|\mathcal{X}_r(\vecb)| \le 1, \quad \mathbb{E}_{\bm{\delta}_r}[\mathcal{X}_r(\vecb)] = 0, \quad \text{Var}[\mathcal{T}_{M}(\vecb)] = \sum_{r=1}^{M}\mathbb{E}_{\bm{\delta}_r}[\mathcal{X}_r^2(\vecb)] < s. 
$$
Thus the random variables $\mathcal{X}_r(\vecb)$ satisfy the similar properties as $X_r(\vecb)$ given in \eqref{eq:Xr}. Thus the results from the proof of Theorem \ref{thm:main_result3} apply accordingly. 





\bibliographystyle{mybst} 
\bibliography{references}



\end{document}